\newtheorem{theorem}{Theorem}
\newtheorem{lemma}[theorem]{Lemma}
\newtheorem{corollary}[theorem]{Corollary}
\newtheorem{fact}[theorem]{Fact}
\newcommand{\etal}{{et~al.}}
\newcommand{\ie}{{i.e.}}
\newcommand{\eg}{{e.g.}}
\newcommand{\old}[1]{{}}
\newcommand{\later}[1]{{}}
\newcommand{\tcup}{{\tt cup}}
\newcommand{\tcap}{{\tt cap}}
\newcommand{\slope}{{\tt slope}}
\newcommand{\e}{\mathrm{e}}
\newcommand{\x}{\mathrm{x}}
\newcommand{\NN}{\mathbb{N}}
\newcommand{\RR}{\mathbb{R}}
\title{\textsc{Convex Polygons in Geometric Triangulations}\footnote{An extended
abstract of this paper appeared in the
{\em Proceedings of the 29th International Symposium on Algorithms
and Data Structures} (WADS 2015), 
vol.~9214 of LNCS, Springer, 2015, pp.~289--300.}}
\author{Adrian Dumitrescu\thanks{Department of Computer Science,
University of Wisconsin--Milwaukee, USA\@.
Email:~\texttt{dumitres@uwm.edu}}
\and
Csaba D. T\'oth\thanks{Department of Mathematics, California State
  University Northridge, Los Angeles, CA, USA; and
  Department of Computer Science, Tufts University, Medford, MA, USA.
  Email:  \texttt{cdtoth@acm.org}
}}
\begin{document}

\maketitle

\begin{abstract}
We show that the maximum number of convex polygons in a triangulation
of $n$ points in the plane is $O(1.5029^n)$.
This improves an earlier bound of $O(1.6181^n)$ established by
van~Kreveld, L{\"o}ffler, and Pach (2012) and almost matches
the current best lower bound of $\Omega(1.5028^n)$ due to the same authors.
Given a planar straight-line graph $G$ with $n$ vertices, we also show how
to compute efficiently the number of convex polygons in $G$.

\bigskip
\textbf{\small Keywords}: convex polygon, triangulation, counting, recurrence.

\smallskip
\textbf{\small AMS Subject Classification}:
05C85, 
05D99, 
52A10, 
68W05. 

\end{abstract}

\section{Introduction} \label{sec:intro}

\paragraph{Convex polygons.}
According to the celebrated Erd\H{o}s-Szekeres theorem~\cite{ESz35},
every set of $n$ points in the plane, no three on a line,
contains $\Omega(\log n)$ points in convex position, and, apart from
the constant factor, this bound is the best possible.
When the $n$ points are in convex position,
then trivially all the $2^n-1$ nonempty subsets are also in convex position.
Erd\H{o}s~\cite{Erd78} proved that the minimum number of subsets in convex position
over all $n$-element point sets with no $3$ collinear points, is $\exp(\Theta(\log^2 n))$.
See also the survey~\cite{MS00} for many other results  related to the
Erd\H{o}s-Szekeres theorem.
\begin{figure}[hbtp]
\centering
\includegraphics[width=.9\textwidth]{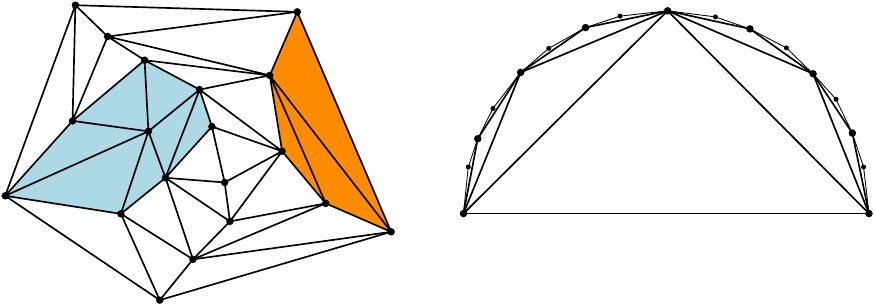}
\caption{Left: A (geometric) triangulation on 19 points; the boundaries
of the two shaded convex polygons are subgraphs of the triangulation.
Right: A triangulation on $2^4+1=17$ points in convex position,
whose dual graph is a full binary tree with $8$ leaves.}
\label{fig:intro}
\end{figure}

Recently, van~Kreveld, L{\"o}ffler, and Pach~\cite{KLP12} posed analogous problems
concerning the number of convex polygons contained in a triangulation of $n$ points
in the plane (as a subgraph); see Fig.~\ref{fig:intro}~(left).
A \emph{convex polygon} is a plane straight-line graph cycle whose interior is convex.
They proved that the maximum number of convex polygons
in a triangulation of $n$ points, no three on a line, is between
$\Omega(1.5028^n)$ and $O(1.6181^n)$. Their lower bound comes from a balanced
binary triangulation on $2^4+1=17$ points shown in Fig.~\ref{fig:intro}~(right).
At the other end of the spectrum, Dumitrescu~\etal~\cite{DLST16} showed that
the \emph{minimum} number of convex polygons in an $n$-vertex triangulation
is $\Theta(n)$. Here we study the \emph{maximum} number
of convex polygons contained in an $n$-vertex triangulation.
Our focus is in the base of the exponent: what is the infimum of $a>0$
such that every $n$-vertex triangulation contains $O(a^n)$ convex polygons?

Throughout this paper we consider planar point sets $S\subset \RR^2$
in \emph{general position}, in the sense that no 3 points are collinear.
A \emph{(geometric) triangulation} of a set $S\subset \RR^2$ is a plane straight-line graph
with vertex set~$S$ such that all bounded faces are triangles that jointly tile the convex
hull of $S$.

\paragraph{Our results.}
We first show that the maximum number of convex polygons in an $n$-vertex
triangulation is attained, up to an $O(n)$-factor, for point sets in convex
position. Consequently, determining the maximum becomes a purely combinatorial problem.
We then prove that the maximum number of convex polygons in a triangulation
of $n$ points in the plane is $O(1.5029^n)$.
This improves an earlier bound of $O(1.6181^n)$ established by
van~Kreveld~\etal~\cite{KLP12} and almost matches
the current best lower bound of $\Omega(1.5028^n)$ due to the same authors
(Theorem~\ref{thm:recurrence} and Corollary~\ref{cor:1} in Subsection~\ref{subsec:paths}).
In deriving the new upper bound, we start in Subsection~\ref{subsec:balanced}
with a careful analysis of a balanced binary triangulation illustrated in
Fig.~\ref{fig:intro}~(right). In Subsection~\ref{subsec:paths} we extend
the analysis to \emph{all} triangulations on $n$ points in convex position.
In Section~\ref{sec:algo} we focus on an algorithmic problem:
given a planar straight-line graph $G$ with $n$ vertices, determine the number of
convex polygons in $G$. Our main results are summarized in the following.

\begin{theorem}\label{thm:main}
The maximum number of convex polygons in a triangulation
of $n$ points in the plane is $O(1.5029^n)$.
\end{theorem}

\begin{theorem}\label{thm:algo}
Given a planar straight-line graph $G$ with $n$ vertices,
the number of convex polygons in $G$ can be computed in $O(n^2)$ time.
The convex polygons can be enumerated in an additional $O(1)$-time per edge.
\end{theorem}

\paragraph{Related work.}
In this paper we derive new upper and lower bounds on the maximum number of convex cycles
in a straight-line triangulation of $n$ points in the plane.
In another article that can be included in the same general theme,
Dumitrescu, Mandal, and T\'oth~\cite{DMT16} recently showed that the (maximum) number of
monotone paths in a geometric triangulation of $n$ points in the plane is $O(1.7864^n)$;
this improves an earlier upper bound of $O(1.8393^n)$ in~\cite{DLST16}; the current best
lower bound, $\Omega(1.7003^n)$, appears in~\cite{DLST16}.

Convex polygons and monotone paths can be defined geometrically---in terms of
angles or coordinates.
Analogous problems have been previously studied for cycles, spanning cycles,
spanning trees, and matchings~\cite{BKK+07} in $n$-vertex edge-maximal planar
graphs---that are defined in purely graph theoretic terms.
For plane straight-line graphs, previous research focused on the
maximum number of (noncrossing) configurations such as plane graphs, spanning trees,
spanning cycles, triangulations, and others, over all $n$-element point sets
in the plane~\cite{AHV+06,ACNS82,DSST13,GNT00,RSW08,SS11,SS13,SSW12,SW06a};
see also the two surveys~\cite{DT12,She14}. Early upper bounds in this area were obtained
by multiplying the maximum number of triangulations on $n$ point in the plane
with the maximum number of desired configurations in an $n$-vertex triangulation,
based on the fact that every planar straight-line graph can be augmented into a triangulation.

The problem of finding the largest convex polygon in a nonconvex container
has a long history in computational geometry. Polynomial-time algorithms are known
in the plane for the problems of computing a convex polygon with the maximum area or
the maximum number of vertices contained in a given simple polygon
with $n$ vertices~\cite{AKLS11,CY86,Goo81} (called the \emph{potato peeling} problem);
or spanned by a given set of $n$ points~\cite{EORW92}.

\section{Convex polygons in a triangulation} \label{sec:convex}

\paragraph{Section outline.}
We reduce the problem of determining the maximum number of convex polygons
in an $n$-vertex triangulation (up to polynomial factors) to triangulations
of $n$ points in convex position (Theorem~\ref{thm:cx},
Section~\ref{subsec:reduction1}). We further reduce the problem
to counting convex \emph{paths} between two adjacent hull vertices
in a triangulation (Lemma~\ref{lem:leaf}, Subsection~\ref{subsec:reduction2}).
We first analyze the number of convex paths in a balanced binary triangulation,
which gives the current best lower bound~\cite{KLP12}
(Theorem~\ref{thm:lambda-ub}, Subsection~\ref{subsec:balanced}).
The new insight gained from this analysis is then generalized to
derive an upper bound for all $n$-vertex triangulations
(Theorem~\ref{thm:recurrence} and Corollary~\ref{cor:1}, Subsection~\ref{subsec:paths}).

\subsection{Reduction to convex position} \label{subsec:reduction1}

For a plane straight-line graph $G$, let $C(G)$ denote the number of convex
polygons in $G$. For an integer $n\geq 3$, let $C(n)$ be the maximum of $C(G)$
over all plane straight-line graphs $G$ of $n$ points in the plane;
and let $C_\x(n)$ be the maximum of $C(G)$ over all plane straight-line graphs $G$
of $n$ points \emph{in convex position}. It is clear that $C_\x(n)\leq C(n)$ for
every $n\geq 3$. The main result of this subsection is the following.

\begin{theorem}\label{thm:cx}
For every $n\geq 3$, we have $C(n)\leq (2n-5) \, C_\x(n)$.
\end{theorem}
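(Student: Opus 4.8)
The plan is to charge every convex polygon in $T$ to a single triangle of $T$ that it encloses, and then to bound, for each fixed triangle, the number of convex polygons charged to it by $C_\x(n)$. The factor $2n-5$ should come from Euler's formula: an $n$-point triangulation with $h\ge 3$ hull vertices has exactly $2n-h-2\le 2n-5$ bounded faces, so $T$ has at most $2n-5$ triangles. If the two ingredients ``at most $2n-5$ triangles'' and ``at most $C_\x(n)$ polygons per triangle'' are established, the theorem follows by summing.

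First I would fix a generic reference (a point $o$, or a generic direction) and, for each convex polygon $P$ in $T$, define a canonical enclosed triangle $\Delta_P$. Since $P$ bounds a convex region that is tiled by triangles of $T$, such a triangle always exists, and a generic rule selects a unique one---for instance the triangle of $T$ hit first by a fixed ray emanating from the lowest vertex of $P$, or the triangle containing a fixed interior reference point when $P$ happens to enclose it. This makes $P\mapsto\Delta_P$ a well-defined map into a set of at most $2n-5$ triangles, so the problem reduces to the fiber estimate $\#\{P:\Delta_P=\Delta\}\le C_\x(n)$ for every triangle $\Delta$ of $T$.

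The heart of the argument, and the step I expect to be the main obstacle, is this fiber estimate: I want an injection from the convex polygons charged to a fixed $\Delta$ into the convex polygons of a \emph{single} triangulation of at most $n$ points in convex position. The natural attempt is to pick $o\in\Delta$, sort the points of $S$ by angle around $o$, and place them in this cyclic order on a convex curve, so that each convex polygon enclosing $o$ maps to a convex polygon on the curve and the map is injective on vertex sets. The difficulty is \emph{planarity}: the transformed edges must jointly form a plane triangulation, yet two non-crossing edges of $T$ can have radially interleaving endpoints around $o$ (one edge far from $o$, the other near it), so that their images cross on the curve and cannot coexist in one convex-position triangulation. Overcoming this is exactly where care is needed---either by choosing the canonical triangle and the transformation so that the polygons in a common fiber are pairwise ``radially compatible'' (their images are mutually non-crossing, hence jointly realizable in one convex triangulation), or by replacing the radial map with a combinatorial injection that encodes each fiber polygon through its intrinsic convex traversal order. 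Once the fiber injection is in place, the monotonicity $C_\x(k)\le C_\x(n)$ for $k\le n$ together with summation over the at most $2n-5$ triangles gives $C(T)\le (2n-5)\,C_\x(n)$ for every triangulation $T$, and taking the maximum over $T$ yields the theorem.
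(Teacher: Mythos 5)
Your global charging scheme is the same as the paper's (in fact you work harder than necessary: the paper does not pick a canonical enclosed triangle, it simply sums over all $\le 2n-5$ bounded faces the number of convex polygons containing that face, since every convex polygon contains at least one bounded face and overcounting is harmless for an upper bound). The real content of the theorem is the fiber estimate, and there your proposal has a genuine gap: you correctly identify that the naive radial map --- project each vertex from a point $o\in\Delta$ onto a circle and send each edge of a polygon to the corresponding chord --- breaks planarity when edges radially interleave, but you do not resolve this; you only gesture at two possible fixes without carrying either out. Since this is exactly the step you yourself flag as ``the main obstacle,'' the proof is not complete.

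The missing idea, as the paper implements it in Lemma~\ref{lem:cx}, is to change what an edge of the target triangulation $T'$ is: one puts an edge $p'q'$ in $T'$ precisely when there is an \emph{empty} triangle $\Delta oab$ with $ab$ contained in an edge of $T$, $p$ on segment $oa$ and $q$ on segment $ob$; a short argument (two empty triangles cannot radially interleave without one containing a point of $S$) shows these edges are pairwise noncrossing, so $T'$ is a plane graph on $n$ points in convex position. An edge $pq$ of a convex polygon around $o$ is then mapped not to the chord $p'q'$ but to the convex \emph{path} $(p',r_1',\ldots,r_k',q')$ through the projections of all points of $S$ interior to $\Delta opq$, taken in angular order; concatenating these paths sends a convex polygon containing $o$ to a convex polygon of $T'$. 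This subdivision is precisely what neutralizes the interleaving problem you raised. Injectivity is then immediate: the preimage of an image polygon with vertex set $\{p_1',\ldots,p_k'\}$ must be the boundary of the convex hull of $\{p_1,\ldots,p_k\}$, hence is unique. Without this construction (or a worked-out substitute), your argument stops exactly at the point where the theorem's difficulty lies.
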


Theorem~\ref{thm:cx} is an immediate consequence of the following lemma.

\begin{lemma}\label{lem:cx}
Let $T$ be a triangulation on a set $S$ of $n$ points in the plane,
and let $f$ be a (triangular) face of $T$. Then there exists
a set $S'$ of $n$ points in the plane in convex position
and a triangulation $T'$ of $S'$ such that
the number of convex polygons in $T$ containing $f$ is at most $C(T')$.
\end{lemma}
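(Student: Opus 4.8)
The plan is to fix a point $o$ in the interior of $f$ and reduce the statement to a monotonicity property proved by induction on the number of points of $S$ lying strictly inside $\conv(S)$. First I would record a structural fact: if $P$ is a convex polygon of $T$ whose interior contains $f$, then $o\in\mathrm{int}(P)$, so the boundary of $P$ meets every ray emanating from $o$ exactly once; hence the vertices of $P$ occur in the cyclic angular order around $o$, the set $V(P)$ is in convex position, and $P$ is the \emph{unique} convex polygon on $V(P)$. In particular $P\mapsto V(P)$ is injective, so it suffices to produce a triangulation $T'$ on $n$ points in convex position, together with a distinguished face $f'$, such that the number of convex polygons of $T$ enclosing $f$ is at most the number of convex polygons of $T'$ enclosing $f'$; the latter is at most $C(T')$.

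The inductive step I would carry out is to move a single interior point out to the convex hull without decreasing the count. Concretely, choose an interior point $p\in S$ that is neither a vertex of $f$ nor incident to $f$, let $a_1,\dots,a_d$ be its neighbours in $T$ (the link of $p$), and push $p$ outward along the ray $op$ until it becomes a hull vertex $p^{*}$, obtaining a point set $S^{*}$ with one fewer interior point. I would build $T^{*}$ from $T$ by deleting the spokes at $p$, retriangulating the resulting star-shaped hole with diagonals among $a_1,\dots,a_d$, and joining $p^{*}$ to the vertices it now sees; every edge of $T$ not incident to $p$ is retained. The key claim is that this does not decrease the number of convex polygons enclosing $f$: every convex polygon of $T$ that avoids $p$ survives unchanged (its edges are non-spokes, its vertices did not move, and $f$ was left intact), while a convex polygon $P$ using $p$ as a vertex, with $P$-neighbours $a_i,a_j$, maps to the polygon $P^{*}$ obtained by replacing $p$ with $p^{*}$. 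Since moving an extreme vertex of $P$ farther from the interior point $o$ along the ray $op$ preserves convexity and keeps $o$ inside, and since the chain of $P$ from $a_i$ to $a_j$ consists of retained non-spoke edges while $p^{*}a_i,p^{*}a_j\in T^{*}$, the image $P^{*}$ is again a convex polygon of $T^{*}$ enclosing $f$. Distinct polygons yield distinct images, giving the required injection; iterating until no interior points remain leaves a convex-position triangulation $T'$, and summing over the at most $2n-5$ bounded faces of $T$ then yields Theorem~\ref{thm:cx}.

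The hard part, and the step I would spend the most care on, is the geometric realizability of a single move. One must place $p^{*}$ on (or just beyond) the hull along $op$ and retriangulate so that, simultaneously, every former neighbour $a_i$ that can serve as a $P$-neighbour of $p$ is joined to $p^{*}$ by a non-crossing edge; the danger is that after $p$ slides outward some of its former neighbours become mutually occluded from $p^{*}$, so the needed edges $p^{*}a_i$ cannot all coexist in one plane triangulation. I expect to resolve this by moving $p$ out only as far as necessary---keeping its link convex and fully visible from $p^{*}$---or by performing the motion continuously and tracking the forced edge flips, charging each convex polygon destroyed by a flip to a distinct one it creates. A secondary obstacle is the degenerate case in which every interior point is incident to $f$ (in particular when a vertex of $f$ is itself an interior point of $S$): moving such a point enlarges $f$, so the enclosure relation reverses and must be re-examined; I would handle this by first exhausting all interior points disjoint from $f$ and then treating the bounded remaining configuration around $f$ directly.
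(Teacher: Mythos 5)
Your reduction to polygons containing a fixed point $o\in f$, and the observation that such a polygon is the boundary of the convex hull of its vertex set (hence determined by it), coincide with the paper's setup and with its injectivity argument. From there, however, you take a genuinely different route---an induction that pushes interior points out to the hull one at a time---and this route has a gap that I do not believe can be repaired along the lines you sketch. The central claim of your inductive step, that replacing a vertex $p$ of a convex polygon $P$ containing $o$ by a point $p^{*}$ farther out on the ray $\overrightarrow{op}$ preserves convexity, is false. Take $P$ with vertices $(0,0),(1,0),(2,0.1),(1.5,1),(0,1)$ (in counterclockwise order), $o=(1,0.5)$ and $p=(2,0.1)$: the ray $\overrightarrow{op}$ crosses the line $y=0$ at $x=2.25$, so any $p^{*}$ beyond that point lies on the wrong side of the supporting line of the edge from $(0,0)$ to $(1,0)$, and the angle of the new polygon at $(1,0)$ becomes reflex. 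Since $p^{*}$ must be pushed all the way past the convex hull of $S$ for the induction to progress, you cannot in general keep it close enough to $p$ to avoid this, and distinct convex polygons through $p$ impose such constraints simultaneously. Your two proposed repairs do not close the gap: the link of $p$ in $T$ may be non-convex, in which case some neighbour $a_i$ is occluded from $p^{*}$ and the edge $p^{*}a_i$ cannot exist in any plane retriangulation, so every polygon using $p$ with neighbour $a_i$ loses its image; and the flip-charging argument is asserted rather than proved---there is no reason an individual flip should create at least as many convex polygons through $o$ as it destroys.

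The paper avoids all of these difficulties with a single global construction rather than an induction: every point of $S$ is projected radially from $o$ onto one circle enclosing $S$, so that \emph{any} subset of the image, taken in cyclic order, is automatically in convex position and convexity never needs to be checked. The price is that polygons cannot be mapped vertex-to-vertex: an edge $pq$ of $T$ is mapped to the \emph{path} through the images of all points of $S$ interior to the triangle $\Delta{opq}$, and the image triangulation $T'$ is defined (via empty triangles $\Delta{oab}$ with $ab$ contained in an edge of $T$) precisely so that these paths exist and do not cross. Injectivity is then exactly your convex-hull observation. Any salvage of your inductive scheme would at minimum have to let the image of a polygon acquire new vertices at each step, which is essentially what the one-shot projection already does; as written, your argument does not establish the lemma.
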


\begin{proof}
We construct a point set $S'$ in convex position, a plane straight-line graph
$G'$ on $S'$, and then give an injective map from the set of convex
polygons in $T$ that contain $f$ into the set of convex polygons of $G'$.
For any triangulation $T'$ of $G'$, we have $C(G')\leq C(T')$.

Let $o$ be a point in the interior of $f$, not contained in any line determined by $S$;
and let $O$ be a circle centered at $o$ that contains all points in $S$ in its interior;
refer to Fig.~\ref{fig:convex}.
For each point $p\in S$, let $p'$ be the intersection point of the ray
$\overrightarrow{op}$ with $O$. Let $S'=\{p': p\in S\}$.

\begin{figure}[htbp]
\centering
\includegraphics[width=.98\textwidth]{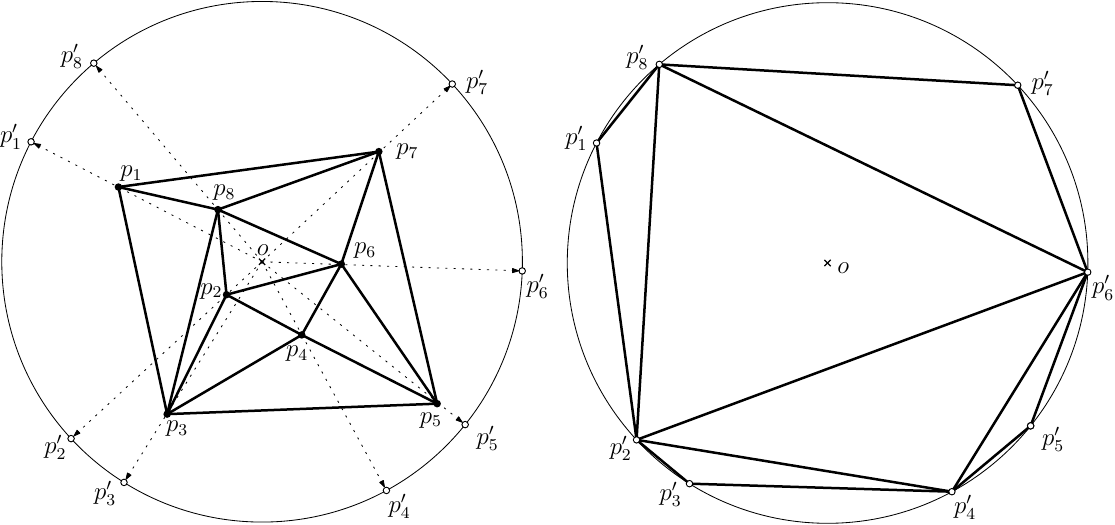}
\caption{A triangulation $T$ on the point set $\{p_1,\ldots ,p_8\}$ (left) is mapped
to a triangulation $G'$ on the point set $\{p_1',\ldots ,p_8'\}$ in convex position (right).
This induces an injective map from the convex polygons in $T$ containing $o$
to convex polygons in $G'$. For example, $(p_3,p_5,p_6,p_8)$ is mapped to
$(p_3',p_4',p_5',p_6',p_8',p_2')$.}
\label{fig:convex}
\end{figure}

We now construct a plane straight-line graph $G'$ on the point set $S'$.
For two points $p', q'\in S'$, insert an edge $p'q'$ in $G'$ if and only
if there is a triangle $\Delta{oab}$ whose interior is disjoint from $S$
such that segment $ab$ is contained in an edge of $T$,
point $p$ lies on segment $oa$, and $q$ lies on $ob$.
Intuitively, the rays $\overrightarrow{op}$ and $\overrightarrow{oq}$ cross a common
edge of $T$ at $a$ and $b$, respectively, and segment $ab$ is ``mapped'' to $p'q'$.

Note that no two edges in $G'$ cross each other. Indeed, suppose to
the contrary that edges $p_1'q_1'$ and $p_2'q_2'$ cross in $G'$.
By construction, there are triangles $\Delta{oa_1b_1}$ and $\Delta{oa_2b_2}$ that induce
$p_1'q_1'$ and $p_2'q_2'$, respectively. We may assume without loss of generality
that both $\Delta{oa_1b_1}$ and $\Delta{oa_2b_2}$ are oriented counterclockwise,
and $\overrightarrow{oa_2}$ enters the interior of $\Delta{oa_1b_1}$
(refer to Fig.~\ref{fig:oab}).
Since $a_1b_1$ and $a_2b_2$ do not cross (they may be collinear),
segment $oa_2$ lies in $\Delta{oa_1b_1}$ or segment $ob_1$ lies in $\Delta{oa_2b_2}$.
That is, one of $\Delta{oa_1b_1}$ and $\Delta{oa_2b_2}$
contains a point from $S$, contradicting the assumption that both triangles are empty.

\begin{figure}[H]
\centering
\includegraphics[width=.85\textwidth]{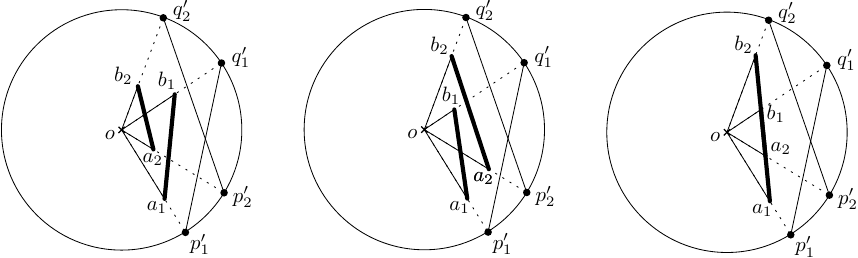}
\caption{Assume that edges $p_1'q_1'$ and $p_2'q_2'$ cross in $G'$;
both $\Delta{oa_1b_1}$ and $\Delta{oa_2b_2}$ are oriented counterclockwise;
and ray ${oa_2}$ enters the interior of $\Delta{oa_1b_1}$.
Then segment $oa_2$ lies in $\Delta{oa_1b_1}$ (left),
or segment $ob_1$ lies in $\Delta{oa_2b_2}$ (middle),
or both (right).}
\label{fig:oab}
\end{figure}

Finally, we define an injective map from the convex polygons of $T$
that contain $o$ into the convex polygons of $G'$. To define this
map, we first map every edge of $T$ to a path in $G'$. Let $pq$ be an edge in $T$,
and assume without loss of generality that $\Delta{opq}$ is oriented counterclockwise.
We map the edge $pq$ to the path $(p'=r_0',r_1',\ldots , r_k',r_{k+1}'=q')$,
where $(r_1,\ldots , r_k)$ is the sequence of all points in $S$ lying in the
interior of $\Delta{opq}$ in counterclockwise order around $o$.
All edges of this path are present in $G'$, since $\Delta{or_ir_{i+1}}$
is empty of vertices in $S$, and both rays $\overrightarrow{or_i}$
and $\overrightarrow{or_{i+1}}$ intersect segment $pq$, for $i=0,\ldots, k$.
A convex polygon $A=(p_1,\ldots , p_k)$ containing $o$ in $T$ is
mapped to the convex polygon $A'$ in $G'$ obtained by concatenating
the images of the edges $p_1p_2,\ldots ,p_{k-1}p_k$, and $p_kp_1$.
Consequently, the vertex set of $A'$ consists of the images of all
points in $S$ that lie on the boundary or in the interior of $A$.

It remains to show that the above mapping is injective on the convex
polygons of $T$ that contain~$o$.
Consider a convex polygon $A'=(p_1', \ldots ,p_k')$ in $G'$ that
is the image of some convex polygon in $T$ containing $o$.
Then the preimage $A$ must be a convex polygon in $T$
for which $\{p_1,\ldots ,p_k\}$ is the set of points in $S$
that lie on the boundary or in the interior of $A$.
Consequently, $A$ is the boundary of the convex hull of
$\{p_1,\ldots ,p_k\}$, that is, $A'$ has a unique preimage.
\end{proof}

\paragraph{Proof of Theorem~\ref{thm:cx}.}
Let $T$ be a (geometric) triangulation with $n$ vertices. Every $n$-vertex triangulation
has at most $2n-4$ faces (including the outer face), and hence at most $2n-5$ bounded faces.
By Lemma~\ref{lem:cx}, each bounded face $f$ of $T$ lies in the interior of at most $C_\x(n)$
convex polygons contained in $T$. Summing over all bounded faces $f$, the number
of convex polygons in $T$ is bounded by $C(T)\leq (2n-5) \, C_\x(n)$, as required.
\hfill$\Box$

\subsection{Reduction to convex paths} \label{subsec:reduction2}

A  \emph{convex path} is a \emph{simple} polygonal chain $(p_1,\ldots,p_m)$
that makes a right turn at each interior vertex $p_2, \ldots, p_{m-1}$. Let
$P(n)$ denote the maximum number of convex paths between two consecutive hull vertices
in a triangulation of $n$ points in convex position.
A convex path from $a$ to $b$ is either a direct path consisting
of a single segment $ab$, or a path that can be decomposed into
two convex subpaths sharing a common endpoint $c$,
where $\Delta{abc}$ is a counterclockwise triangle incident to $ab$;
see Fig.~\ref{fig:paths}.
\begin{figure}[hbtp]
\centering
\includegraphics[width=.66\textwidth]{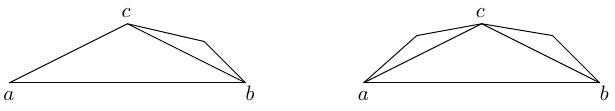}
\caption{Convex paths in a triangulation.
Left: $P(4)= P(2) \, P(3) + 1 = 3$.
Right: $P(5)= P(3) \, P(3) + 1 = 5$. }
\label{fig:paths}
\end{figure}

Thus $P(n)$ satisfies the following recurrence for $n\geq 3$, with
initial values $P(2)=1$ and $P(3)=2$.
\begin{equation} \label {eq:1}
P(n)= \!\max_
{\substack {n_1 + n_2=n+1\\ n_1, n_2 \geq 2}} \left\{ P(n_1) \, P(n_2) + 1 \right\}.
\end{equation}

\paragraph{Remark.}
The values of $P(n)$ for $2\leq n\leq 18$ are shown in Table~\ref{table}.
It is worth noting that $P(n)$ need not be equal to
$P(\lfloor \frac{n+1}{2}\rfloor)\, P(\lceil \frac{n+1}{2}\rceil)+1$;
for instance, $P(7)=P(3)\, P(5)+1>P(4)\, P(4)+1$.
That is, the balanced partition of a convex $n$-gon into two subpolygons
does not always maximize $P(n)$.
However, we have $P(n)=P(\frac{n+1}{2})\, P(\frac{n+1}{2})+1$ for $n=2^k+1$ and
$k=1,2,3,4$; these are the values relevant for the (perfectly) balanced binary
triangulation discussed in Subsection~\ref{subsec:balanced}.

Let $ab$ be a hull edge of a triangulation $T$ on $n$ points in convex
position. Suppose that $ab$ is incident to a counterclockwise triangle
$\Delta{abc}$. The edges $ac$ and $bc$ decompose $T$ into three
triangulations $T_1$, $\Delta{abc}$ and $T_2$, of size $n_1$, $3$ and
$n_2$, where $n_1+n_2=n+1$. A convex polygon in $T$ is either (i)~contained in $T_1$;
or (ii)~contained in $T_2$; or (iii)~the union of $ab$ and a convex path from
$a$ to $b$ that passes through $c$; see Fig.~\ref{fig:paths}.
Consequently, $C_\x(n)$, the maximum number of convex polygons contained in a
triangulation of $n$ points in convex position, satisfies the following recurrence:
\begin{equation} \label {eq:13}
C_\x(n) = \!\max_
{\substack {n_1 + n_2=n+1\\ n_1, n_2 \geq 2}} \left\{ P(n_1) \, P(n_2)
+ C_\x(n_1) + C_\x(n_2) \right\}
\end{equation}
for $n\geq 3$, with initial values $C_\x(2)=0$ and $C_\x(3)=1$.
The values of $C_\x(n)$ for $2\leq n \leq 9$ are displayed in Table~\ref{table}.

\begin{table*}[ht]
\begin{center}
\begin{tabular}{||c||c|c|c|c|c|c|c|c|c|c|c|c|c|c|c|c|c|c||} \hline
$n$ & 2 & 3 & 4 & 5 & 6 & 7 & 8 & 9 & 10 & 11 & 12 & 13 & 14 & 15 & 16 &17 &18 \\ \hline
$P(n)$  & 1 & 2 & 3 & 5 & 7 & 11 & 16 & 26 & 36 & 56 & 81 & 131 & 183
  & 287 & 417 & 677 & 937\\ \hline
$C_\x(n)$ & 0 & 1 & 3 & 6 & 11 & 18 & 29 & 45 & & & & & & & & & \\ \hline
\end{tabular}
\end{center}
\caption{$P(n)$ and $C_\x(n)$ for small $n$.}
\label{table}
\end{table*}
\begin{lemma}\label{lem:leaf}
We have $C_\x(n) \leq \sum_{k=2}^{n-1} P(k)$. Consequently, $C_\x(n) \leq n \, P(n)$.
\end{lemma}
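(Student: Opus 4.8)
The plan is to prove the two inequalities in turn, deriving the first by induction on $n$ using recurrence~\eqref{eq:13}, and then obtaining the second as an easy consequence.

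For the main inequality $C_\x(n)\leq \sum_{k=2}^{n-1}P(k)$, I would argue by strong induction on $n$. The base cases $n=2$ and $n=3$ are checked directly against the stated initial values: $C_\x(2)=0$ matches the empty sum, and $C_\x(3)=1=P(2)$. For the inductive step, fix $n\geq 3$ and let $n_1,n_2\geq 2$ with $n_1+n_2=n+1$ be the split that attains the maximum in~\eqref{eq:13}, so that
\begin{equation*}
C_\x(n) = P(n_1)\,P(n_2) + C_\x(n_1) + C_\x(n_2).
\end{equation*}
Since $n_1,n_2<n$ (because each is at least $2$ and they sum to $n+1$), the induction hypothesis gives $C_\x(n_i)\leq \sum_{k=2}^{n_i-1}P(k)$ for $i=1,2$. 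Substituting, it remains to show
\begin{equation*}
P(n_1)\,P(n_2) + \sum_{k=2}^{n_1-1}P(k) + \sum_{k=2}^{n_2-1}P(k) \;\leq\; \sum_{k=2}^{n-1}P(k).
\end{equation*}

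The key step, and where I expect the main work to lie, is establishing this last inequality. The natural route is to bound the product term $P(n_1)\,P(n_2)$ by a single value $P(m)$ for an appropriate index $m$, and then account for the remaining summands. Since $P$ satisfies recurrence~\eqref{eq:1}, we have $P(n_1)\,P(n_2)+1\leq P(n_1+n_2-1)=P(n)$, hence $P(n_1)\,P(n_2)\leq P(n)-1$. The remaining two partial sums together run over indices in $\{2,\ldots,n_1-1\}\cup\{2,\ldots,n_2-1\}$; because $n_1+n_2-1=n$, one checks that these indices, counted with multiplicity, can be matched into the index set $\{2,\ldots,n-2\}$ so that $\sum_{k=2}^{n_1-1}P(k)+\sum_{k=2}^{n_2-1}P(k)\leq \sum_{k=2}^{n-2}P(k)+1$, using monotonicity of $P$ to absorb any slack and the extra $+1$ to cancel the $-1$ above. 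Combining, $C_\x(n)\leq (P(n)-1)+\sum_{k=2}^{n-2}P(k)+1=\sum_{k=2}^{n-1}P(k)$, completing the induction. I would verify the index-matching claim carefully, as the interaction between the two partial-sum ranges and the target range is the only genuinely delicate point; symmetry in $n_1,n_2$ and the case $n_1=2$ or $n_2=2$ should be treated explicitly.

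Finally, the consequence $C_\x(n)\leq n\,P(n)$ follows immediately: by monotonicity of $P$ (evident from Table~\ref{table} and provable from~\eqref{eq:1}), each of the at most $n-2$ terms in $\sum_{k=2}^{n-1}P(k)$ is at most $P(n)$, so the whole sum is at most $(n-2)P(n)\leq n\,P(n)$. This requires only that $P$ be nondecreasing, which is a routine induction from the recurrence.
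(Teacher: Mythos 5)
Your proof is correct, but it takes a genuinely different route from the paper. The paper never touches recurrence~\eqref{eq:13} here: instead it proves the sharper single-step inequality $C_\x(n)\leq P(n-1)+C_\x(n-1)$ by a direct combinatorial argument --- pick a leaf triangle $\Delta{abc}$ of the dual tree (so $c$ is an ``ear tip'' of degree~$2$) and classify convex polygons according to whether they use both edges $ac,cb$ (at most $P(n-1)$ such polygons, since deleting $c$ leaves a convex $a$-to-$b$ path in the triangulation on $n-1$ points) or neither (at most $C_\x(n-1)$); telescoping then gives $\sum_{k=2}^{n-1}P(k)$. You instead argue purely arithmetically from~\eqref{eq:13}, which forces you to control an arbitrary split $n_1+n_2=n+1$; the two ingredients you need --- the bound $P(n_1)\,P(n_2)+1\leq P(n)$ (immediate from~\eqref{eq:1}) and the index-matching claim --- do both hold. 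For the latter, the clean way to finish is to send $k\in\{2,\ldots,n_1-1\}$ to $k+n_2-2\in\{n_2,\ldots,n-2\}$, which together with the untouched range $\{2,\ldots,n_2-1\}$ partitions $\{2,\ldots,n-2\}$ exactly; monotonicity of $P$ then gives $\sum_{k=2}^{n_1-1}P(k)+\sum_{k=2}^{n_2-1}P(k)\leq\sum_{k=2}^{n-2}P(k)$ with no ``$+1$'' slack needed, and the $-1$ from the product bound is simply discarded. The trade-off: your argument is formally self-contained once~\eqref{eq:13} is granted, but it inherits whatever care is needed to justify that recurrence; the paper's ear-peeling argument bypasses~\eqref{eq:13} entirely and yields the stronger intermediate inequality~\eqref{eq:6} directly from the geometry. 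Your derivation of the consequence $C_\x(n)\leq n\,P(n)$ from monotonicity of $P$ matches the paper's.
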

\begin{proof}
  We first prove the inductive inequality:
\begin{equation} \label{eq:6}
C_\x(n) \leq P(n-1)+ C_\x(n-1).
\end{equation}
Let $T$ be an arbitrary triangulation of a set $S$ of $n$ points in the plane.
Consider the dual graph $T^*$ of $T$, with a vertex for each triangle in $T$
and an edge for every pair of triangles sharing an edge. It is well known that
if the $n$ points are in convex position, then $T^*$ is a tree.
Let $\Delta{abc}$ be a triangle corresponding to a leaf in $T^*$,
sharing a unique edge, say $e=ab$, with other triangles in $T$; see Fig.~\ref{fig:leaf}.
\begin{figure}[hbtp]
\centering
\includegraphics[width=.3\textwidth]{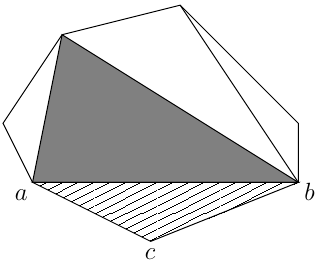}
\caption{Proof of Lemma~\ref{lem:leaf}.}
\label{fig:leaf}
\end{figure}

We distinguish two types of convex polygons contained in $T$:
(i) those containing both edges $ac$ and $cb$, and
(ii) those containing neither $ac$ nor $cb$.
Observe that the number of convex polygons of type (i) is at most $P(n-1)$,
since any such polygon can be decomposed into the path $(b,c,a)$ and another
path connecting $a$ and $b$ in the subgraph of $T$ induced by $S \setminus \{c\}$.
Similarly, the number of convex polygons of type (ii) is at most $C_\x(n-1)$, since
they are contained in the subgraph of $T$ induced by $S \setminus \{c\}$.
Altogether we have $C_\x(n) \leq P(n-1)+ C_\x(n-1)$ and~\eqref{eq:6} is
established.

The recursion~\eqref{eq:6} yields
$$ C_\x(n) \leq \sum_{k=2}^{n-1} P(k),$$
which is the first inequality in the lemma.
Since $P(k) \leq P(k+1)$, for every $k \geq 2$,
the second inequality in the lemma follows from the first.
\end{proof}

\subsection{The balanced binary triangulation} \label{subsec:balanced}

In this subsection we revisit the balanced binary triangulation
on $n$ points, used by van Kreveld, L\"offler and Pach~\cite[Sec.~3.1]{KLP12} in deriving their
lower bound $\Omega(1.5028^n)$ on the number of convex polygons contained in a triangulation.
For a fixed $k\in \NN$, let $T_k$
be the triangulation on $2^k+1$ points, say, on a circular arc, such that
the dual graph $T_k^*$ is a balanced binary tree; see Fig.~\ref{fig:intro}~(right).
Denote by $\lambda_k$ the number of convex paths between the leftmost and the rightmost
vertex in $T_k$. As noted in~\cite{KLP12}, $\lambda_k$ satisfies the following recurrence:
\begin{equation} \label{eq:10}
\lambda_{k+1}=\lambda_k^2+1, \text{ for } k \geq 0, \hspace{1cm} \lambda_0=1.
\end{equation}
The values of $\lambda_k$ for $0 \leq k \leq 5$ are shown in Table~\ref{table2}.
Note that $\lambda_k=P(2^k+1)$ for these values.
\begin{table*}[ht]
\begin{center}
\begin{tabular}{||c||c|c|c|c|c|c||} \hline
$k$          & 0 & 1 & 2 & 3 &  4  & 5\\ \hline
$\lambda_k$  & 1 & 2 & 5 & 26 & 677& 458330\\ \hline
\end{tabular}
\end{center}
\caption{The values of $\lambda_k$ for small $k$.}
\label{table2}
\end{table*}

The authors constructed a triangulation of $n=m2^k+1$ points, for $m\in \NN$,
by concatenating $m$ copies of $T_k$ along a common circular arc,
where consecutive copies share a vertex, and by triangulating the
convex hull of the $m$ chords arbitrarily to obtain a triangulation of the $n$ points.
The lower bound $\Omega(1.5028^n)$ in~\cite[Sec.~3.1]{KLP12}
is obtained by setting $k=4$. This construction yields
$$ C(n) \geq C_\x(n) \geq \lambda_4 ^{(n-1)/16} =
\lambda_4^{-1/16} \left(\lambda_4^{1/16}\right)^n =\Omega(1.5028^n), $$
for every $n=2^4 m+1$.

Since the above lower bound directly depends on $\lambda_k$, we next examine this dependency.
Obviously~\eqref{eq:10} implies that the sequence
$(\lambda_k)^{1/2^k}$ is strictly increasing; as such,
$\lambda_k \geq 1.5028^{2^k}$ for every $k \geq 4$.
In this subsection (Theorem~\ref{thm:lambda-ub}), we establish an almost matching upper bound
$\lambda_k \leq 1.50284^{2^k}$, or equivalently, $(\lambda_k)^{1/2^k}\leq 1.50284$
for every $k \geq 0$. The tools used here streamline the way to Subsection~\ref{subsec:paths},
where we prove an upper bound of $O(1.50285^n)$ on the number of convex polygons contained
in any triangulation of $n$ points.

We start by bounding $\lambda_k$ from above by a product.
To this end we frequently use the standard inequality $1+x \leq \e^x$,
where $\e$ is the base of the natural logarithm.

\begin{lemma}\label{lem:ind}
For $k\in \NN$, we have
\begin{equation} \label{eq:7}
\lambda_k \leq 2^{2^{k-1}} \prod_{i=1}^{k-1} \left( 1+ \frac{1}{2^{2^i}} \right)^{2^{k-1-i}}.
\end{equation}
\end{lemma}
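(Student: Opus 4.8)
The plan is to prove \eqref{eq:7} by induction on $k$, exploiting the fact that the right-hand side obeys a recurrence that mirrors \eqref{eq:10}. Write $a_k$ for the right-hand side of \eqref{eq:7}, so that the goal becomes $\lambda_k \le a_k$ for all $k$. First I would record the multiplicative recurrence satisfied by $a_k$. Squaring $a_k$ doubles every exponent $2^{k-1-i}$ to $2^{k-i}$ and turns the prefactor $2^{2^{k-1}}$ into $2^{2^k}$, so that $a_k^2 = 2^{2^k}\prod_{i=1}^{k-1}(1+2^{-2^i})^{2^{k-i}}$; comparing with $a_{k+1} = 2^{2^k}\prod_{i=1}^{k}(1+2^{-2^i})^{2^{k-i}}$ shows that for $k\ge 1$ the two differ by exactly the single factor indexed $i=k$, namely $a_{k+1} = a_k^2\left(1+2^{-2^k}\right)$. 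This is the discrete analogue of $\lambda_{k+1}=\lambda_k^2+1$, with the additive $+1$ replaced by the multiplicative factor $(1+2^{-2^k})$.

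The second ingredient is the trivial lower bound $a_k \ge 2^{2^{k-1}}$, which holds because every factor $(1+2^{-2^i})$ exceeds $1$ and is raised to a nonnegative power, so the product in the definition of $a_k$ is at least $1$. Equivalently $a_k^2 \ge 2^{2^k}$, hence $a_k^2\cdot 2^{-2^k} \ge 1$. This is precisely the inequality that converts the additive step of the $\lambda$-recurrence into the multiplicative step of the $a$-recurrence.

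With these in hand the induction is short. For the base cases I would check directly that $\lambda_0 = 1 \le \sqrt{2} = a_0$ and $\lambda_1 = 2 = a_1$ (indeed \eqref{eq:7} is tight at $k=1$ and $k=2$). For the inductive step, assuming $\lambda_k \le a_k$ for some $k\ge 1$, I would combine the two recurrences:
\[
\lambda_{k+1} = \lambda_k^2 + 1 \le a_k^2 + 1 \le a_k^2 + a_k^2\cdot 2^{-2^k} = a_k^2\left(1+2^{-2^k}\right) = a_{k+1},
\]
where the first inequality is the induction hypothesis, the second uses $a_k^2\cdot 2^{-2^k}\ge 1$, and the final equality is the recurrence for $a_k$. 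This closes the induction.

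The only point requiring care---the ``main obstacle,'' such as it is---is the bookkeeping with the products: one must verify that the factor separated off when passing from $a_k^2$ to $a_{k+1}$ is genuinely $(1+2^{-2^k})$ and that this identity holds for every $k\ge 1$. It degenerates at $k=0$, where the relevant product is empty, which is exactly why $k=1$ must serve as a base case rather than being reached from $k=0$ by the inductive step. Note that the inequality $1+x\le \e^x$ is \emph{not} needed for this lemma; it will enter only afterwards, when the product bound \eqref{eq:7} is converted into the clean exponential estimate $\lambda_k \le 1.50284^{2^k}$.
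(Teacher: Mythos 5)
Your proof is correct and follows essentially the same route as the paper: square the right-hand side, observe that it exceeds $2^{2^k}$ so that the additive $+1$ in $\lambda_{k+1}=\lambda_k^2+1$ can be absorbed into the multiplicative factor $\left(1+2^{-2^k}\right)$, and induct from the base case $k=1$. Your explicit remark that the step from $k=0$ to $k=1$ degenerates (forcing $k=1$ to be the base case) is a point the paper leaves implicit, but the argument is the same.
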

\begin{proof}
Observe that~\eqref{eq:10} implies $\lambda_k \geq 2^{2^{k-1}}$ for $k\geq 1$.
We thus have
\begin{align*}
\lambda_0 &=1, \\
\lambda_1 &=1^2 +1=2, \\
\lambda_2 &=\lambda_1^2 +1= 2^2 \left( 1 + \frac{1}{2^2} \right), \\
\lambda_3 &=\lambda_2^2 +1 \leq 2^4 \left( 1 + \frac{1}{2^2} \right)^2
 \left( 1 + \frac{1}{2^4} \right), \\
\vdots
\end{align*}
We prove~\eqref{eq:7} by induction on $k$. The base case $k=1$ is verified as shown above.
For the induction step, we assume that inequality~\eqref{eq:7} holds for $k$ and show that it
holds for $k+1$. Indeed, we have
\begin{align*}
\lambda_{k+1} &=\lambda_k^2 +1 \leq
2^{2^{k}} \prod_{i=1}^{k-1} \left( 1+ \frac{1}{2^{2^i}} \right)^{2^{k-i}} +1 \\
&\leq 2^{2^{k}} \prod_{i=1}^{k-1} \left( 1+ \frac{1}{2^{2^i}} \right)^{2^{k-i}}
\left( 1+ \frac{1}{2^{2^k}} \right)
= 2^{2^{k}} \prod_{i=1}^{k} \left( 1+ \frac{1}{2^{2^i}} \right)^{2^{k-i}},
\end{align*}
as required.
\end{proof}

The following sequence is instrumental for manipulating the exponents in~\eqref{eq:7}. Let
\begin{equation}\label{eq:alpha}
\alpha_k=2^k + k+1 \text{~~~~for } k \geq 1.
\end{equation}
That is, $\alpha_1=4$, $\alpha_2=7$, $\alpha_3=12$, $\alpha_4=21$, $\alpha_5=38$, etc.
The way this sequence appears will be evident in Lemma~\ref{lem:withalpha},
and subsequently, in the chains of inequalities~\eqref{eq:14} and~\eqref{eq:15}
in the proof of Theorem~\ref{thm:recurrence}.
We next prove the following.

\begin{lemma}\label{lem:withalpha}
For $k\in \NN$, we have
\begin{equation} \label{eq:8}
 \lambda_k \leq 2^{2^{k-1}} \exp \left( 2^k \sum_{i=1}^{k-1} 2^{-\alpha_i} \right).
\end{equation}
\end{lemma}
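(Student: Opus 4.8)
The plan is to start from the bound established in Lemma~\ref{lem:ind} and convert the finite product into an exponential sum, showing that each factor's contribution is precisely governed by the sequence $\alpha_i$ defined in~\eqref{eq:alpha}. The target inequality~\eqref{eq:8} has the same leading factor $2^{2^{k-1}}$ as~\eqref{eq:7}, so the entire task reduces to bounding the product term in~\eqref{eq:7} by $\exp\!\left( 2^k \sum_{i=1}^{k-1} 2^{-\alpha_i} \right)$.

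First I would apply the standard inequality $1+x \leq \e^x$ to each factor of the product in~\eqref{eq:7}, with $x = 2^{-2^i}$. This yields
\begin{equation*}
\prod_{i=1}^{k-1} \left( 1+ \frac{1}{2^{2^i}} \right)^{2^{k-1-i}}
\leq \prod_{i=1}^{k-1} \exp\!\left( 2^{-2^i} \right)^{2^{k-1-i}}
= \exp\!\left( \sum_{i=1}^{k-1} 2^{k-1-i} \, 2^{-2^i} \right).
\end{equation*}
The next step is purely a matter of rewriting the exponent. I would combine the powers of $2$ in each summand, $2^{k-1-i} \cdot 2^{-2^i} = 2^{\,k-1-i-2^i}$, and then factor out $2^k$ so that the remaining exponent matches $-\alpha_i$. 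Indeed $k-1-i-2^i = k - (2^i + i + 1) = k - \alpha_i$, so each summand equals $2^{k-\alpha_i} = 2^k \cdot 2^{-\alpha_i}$. This is exactly where the sequence $\alpha_k = 2^k + k + 1$ earns its definition: it is the unique shift that absorbs the three contributions $2^i$, $i$, and the constant $1$ coming from the squaring exponent, the product index, and the offset in Lemma~\ref{lem:ind}.

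Substituting back, the exponent becomes $\sum_{i=1}^{k-1} 2^k \, 2^{-\alpha_i} = 2^k \sum_{i=1}^{k-1} 2^{-\alpha_i}$, and multiplying by the leading factor $2^{2^{k-1}}$ gives precisely the right-hand side of~\eqref{eq:8}. I expect no serious obstacle here: the argument is a short chain of the $1+x\le\e^x$ estimate followed by an exponent bookkeeping identity. The only point requiring a moment of care is the algebraic verification that $k-1-i-2^i = k-\alpha_i$, i.e.\ that the definition~\eqref{eq:alpha} of $\alpha_i$ is calibrated correctly against the exponents appearing in~\eqref{eq:7}; once that identity is checked, the lemma follows immediately.
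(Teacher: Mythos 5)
Your proposal is correct and follows exactly the paper's own argument: apply $1+x\leq \e^x$ to each factor of the product in Lemma~\ref{lem:ind}, then rewrite the exponent via the identity $k-1-i-2^i = k-\alpha_i$ to obtain~\eqref{eq:8}. No differences worth noting.
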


\begin{proof}
The inequality $1+x \leq \e^x$ in~\eqref{eq:7} yields:
\begin{align*}
\lambda_k &\leq 2^{2^{k-1}} \prod_{i=1}^{k-1} \left( 1+ \frac{1}{2^{2^i}} \right)^{2^{k-1-i}}
\leq 2^{2^{k-1}} \exp \left( \sum_{i=1}^{k-1} 2^{k-1-i -2^i} \right) \\
&=2^{2^{k-1}} \exp \left( \sum_{i=1}^{k-1} 2^{k-\alpha_i} \right)
=2^{2^{k-1}} \exp \left( 2^k \sum_{i=1}^{k-1} 2^{-\alpha_i} \right),
\end{align*}
as required.
\end{proof}

Taking the $1/2^k$ root in~\eqref{eq:8} yields a first rough approximation
(details in Fact~\ref{fact:1} of Appendix~\ref{sec:numeric}):
\begin{equation*}
(\lambda_k)^{1/2^k} \leq 2^{2^{k-1}/2^k}
\exp \left( 2^k/2^k \sum_{i=1}^{k-1} 2^{-\alpha_i} \right)
\leq 2^{1/2} \exp{ \left( \sum_{i=1}^\infty 2^{-\alpha_i} \right)}
\leq 1.5180,
\end{equation*}
To obtain a sharper estimate, we keep the first few terms in the
sequence as they are, and only introduce approximations for latter terms.

\begin{theorem} \label{thm:lambda-ub}
For every $k\in \NN$, we have $\lambda_k \leq 1.50284^{2^k}$.
\end{theorem}
\begin{proof}
From~\eqref{eq:10}, for every $k \geq 0$ we have
\begin{align*}
\lambda_{k+1} &=\lambda_k^2 +1 =\lambda_k^2 \left( 1 + \frac{1}{\lambda_k^2} \right)
\leq \lambda_k^2 \left( 1 + \frac{1}{2^{2^k}} \right), \\
\lambda_{k+2} &=\lambda_{k+1}^2 +1 =\lambda_{k+1}^2 \left( 1 + \frac{1}{\lambda_{k+1}^2} \right)
\leq \lambda_k^4 \left( 1 + \frac{1}{2^{2^k}} \right)^2
\left( 1 + \frac{1}{2^{2^{k+1}}} \right), \\
\lambda_{k+3} &=\lambda_{k+2}^2 +1 =\lambda_{k+2}^2 \left( 1 + \frac{1}{\lambda_{k+2}^2} \right)
\leq \lambda_k^8 \left( 1 + \frac{1}{2^{2^k}} \right)^4
\left( 1 + \frac{1}{2^{2^{k+1}}} \right)^2
\left( 1 + \frac{1}{2^{2^{k+2}}} \right), \\
\vdots
\end{align*}
For every $k \geq 0$ and $i \geq 1$ we have
\begin{align*}
\lambda_{k+i} &=\lambda_{k+i-1}^2 +1 =\lambda_{k+i-1}^2 \left( 1 + \frac{1}{\lambda_{k+i-1}^2} \right)
\leq (\lambda_k)^{2^{i}}  \prod_{j=1}^{i} \left( 1+ \frac{1}{2^{2^{k+j-1}}} \right)^{2^{i-j}}\\
&\leq (\lambda_k)^{2^{i}}  \exp \left( \sum_{j=1}^{i} 2^{i+k-\alpha_{k+j-1}} \right)
= (\lambda_k)^{2^{i}} \exp \left( 2^{i+k} \sum_{j=1}^{i} 2^{-\alpha_{k+j-1}} \right).
\end{align*}

Consequently,
\begin{equation*}
(\lambda_{k+i})^{1/2^{k+i}}
\leq (\lambda_k)^{2^{i}/2^{i+k}} \exp \left( \sum_{j=1}^{i} 2^{-\alpha_{k+j-1}} \right)
= (\lambda_k)^{1/2^k} \exp \left( \sum_{j=1}^{i} 2^{-\alpha_{k+j-1}} \right).
\end{equation*}

For $i \geq 1$ and $k=4$ the above inequality is
\begin{equation*}
(\lambda_{4+i})^{1/2^{4+i}}
  \leq (\lambda_4)^{1/2^4} \exp \left( \sum_{j=1}^{i} 2^{-\alpha_{4+j-1}} \right)
=(\lambda_4)^{1/2^4} \exp \left( \sum_{j=4}^{i+3} 2^{-\alpha_j} \right).
  \end{equation*}

Put $k=i+4 \geq 5$; and so the following holds for $k \geq 5$:
\begin{align} \label{eq:11}
(\lambda_k)^{1/2^k} &\leq (\lambda_4)^{1/2^4} \exp \left(\sum_{i=4}^{k-1} 2^{-\alpha_i} \right)
= 677^{1/16} \exp \left(\sum_{i=4}^{k-1} 2^{-\alpha_i} \right) \nonumber \\
&\leq 677^{1/16} \exp \left(\sum_{i=4}^\infty 2^{-\alpha_i} \right) \leq 1.50284.
\end{align}
The last inequality in the above chain is Fact~\ref{fact:2} in Appendix~\ref{sec:numeric}.
The inequality $(\lambda_k)^{1/2^k} \leq 1.50284$ also holds for
$0 \leq k \leq 4$, and thus for all $k \geq 0$, as required
(recall that  the sequence $(\lambda_k)^{1/2^k}$ is strictly increasing).
\end{proof}

\subsection{Convex paths in a triangulation of a convex point set} \label{subsec:paths}

In this subsection we show that the maximum number of convex paths
between two adjacent vertices in a triangulation of $n$ points in convex position
is $O(1.50284^n)$, that is, $P(n)=O(1.50284^n)$.
In the main step, a complex proof by induction yields the following.

\begin{theorem} \label{thm:recurrence}
Let $n \geq 2$, where $2^k +1 \leq n \leq 2^{k+1}$. Then
\begin{equation} \label {eq:2}
P(n)^{\frac{1}{n-1}}  \leq (P(17))^{1/16} \exp{ \left( \sum_{i=4}^{k-1} 2^{-\alpha_i} \right)}
=677^{1/16} \exp{ \left( \sum_{i=4}^{k-1} 2^{-\alpha_i} \right)}.
\end{equation}
\end{theorem}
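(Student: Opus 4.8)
The plan is to argue by strong induction on $n$, driving everything through the recurrence~\eqref{eq:1}. Write $\beta_k := 677^{1/16}\exp\!\bigl(\sum_{i=4}^{k-1} 2^{-\alpha_i}\bigr)$ for the target bound, so the claim is $P(n)\le \beta_k^{\,n-1}$ whenever $2^k+1\le n\le 2^{k+1}$. Three features of $(\beta_k)$ drive the proof: it is nondecreasing (as $\alpha_i>0$), it is constant, $\beta_0=\dots=\beta_4=677^{1/16}$, up to $k=4$, and it satisfies $\beta_k=\beta_{k-1}\exp(2^{-\alpha_{k-1}})$ for $k\ge 5$. The last identity is where the exponents $\alpha_i$ of~\eqref{eq:alpha} enter, and the factor $\exp(2^{-\alpha_{k-1}})$ is exactly the reserve that must pay for the $+1$ in~\eqref{eq:1}. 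Since $P(n)$ is the maximum over admissible splits, it suffices to show, for every split $n_1+n_2=n+1$ with $2\le n_1\le n_2$, that $P(n_1)\,P(n_2)+1\le \beta_k^{\,n-1}$; the maximum is then bounded as well.

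The arithmetic of the split is favorable: from $n_1\le n_2$ and $n_1+n_2\le 2^{k+1}+1$ one gets $n_1\le 2^k$, so the smaller part has range index $k_1\le k-1$ and, since $n_2\le n-1<2^{k+1}$, the larger part has $k_2\le k$. The induction hypothesis supplies $P(n_2)\le\beta_{k_2}^{\,n_2-1}\le\beta_k^{\,n_2-1}$ and $P(n_1)\le\beta_{k_1}^{\,n_1-1}$, whose right-hand sides already carry the partial sums $\exp(\sum_{i=4}^{k_2-1}2^{-\alpha_i})$ and $\exp(\sum_{i=4}^{k_1-1}2^{-\alpha_i})$; promoting both indices up to $k$ is what reconstitutes the full sum $\sum_{i=4}^{k-1}2^{-\alpha_i}$ in the target, with the newly added terms $2^{-\alpha_{k-1}},\dots$ furnishing the reserve above the product $P(n_1)P(n_2)$. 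Because $(n_1-1)+(n_2-1)=n-1$, the whole step reduces to showing that the surplus $\beta_k^{\,n-1}-P(n_1)P(n_2)\ge \beta_k^{\,n_2-1}\bigl(\beta_k^{\,n_1-1}-P(n_1)\bigr)$ is at least $1$.

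The hard part is the additive $+1$, and it is genuinely delicate because the bound is essentially tight at the balanced points $n=2^k+1$, where $P(n)=\lambda_k$ and Theorem~\ref{thm:old} already gives $\lambda_k\le\beta_k^{\,2^k}=\beta_k^{\,n-1}$ with only a minute gap. The naive estimate $P(n_1)\le\beta_{k-1}^{\,n_1-1}$ is fatal here: for a very unbalanced split (say $n_1=2$) it replaces the true value $P(2)=1$ by $\beta_{k-1}\approx 1.5$, and the resulting surplus $\beta_k^{\,n_2-1}(\beta_k-\beta_{k-1})$ falls far below $1$. The fix, which I expect to be the crux of the argument and the reason the two chains of inequalities are separated into a near-balanced and a very unbalanced regime, is to bound the smaller part by the \emph{genuine} slack $\beta_k^{\,n_1-1}-P(n_1)>0$ --- coming from the base-case verification of $P(m)\le\beta_4^{\,m-1}$ for $m\le 32$ and, at larger tight points, from the gap in Theorem~\ref{thm:old} --- rather than by an index comparison. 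One then checks $\beta_k^{\,n_2-1}\bigl(\beta_k^{\,n_1-1}-P(n_1)\bigr)\ge 1$ using that $\beta_k^{\,n_2-1}\ge\beta_k^{\,2^{k-1}}$ grows doubly exponentially in $k$, while the slack, though it can shrink to the size of the Theorem~\ref{thm:old} gap at the binding split $n_1=n_2=2^{k-1}+1$, remains strictly positive for every $k\ge 5$. The base cases $n\le 32$ (equivalently $k\le 4$, where $\beta_k=677^{1/16}$) are disposed of by direct computation from~\eqref{eq:1}.
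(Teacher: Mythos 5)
Your skeleton matches the paper's: strong induction on $n$ through~\eqref{eq:1}, base cases $n\le 32$ by direct computation, and for each split $n_1+n_2=n+1$ a reserve that must absorb the additive $+1$. Your reduction to showing $\beta_k^{\,n_2-1}\bigl(\beta_k^{\,n_1-1}-P(n_1)\bigr)\ge 1$ is valid, and you correctly observe that for small $n_1$ the only usable reserve is the genuine numerical gap $\beta_k^{\,n_1-1}-P(n_1)$ coming from the base-case computations; this is exactly the paper's Case~1, settled there by the numerical inequality of Fact~\ref{fact:Case1}.

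There is, however, a genuine gap at the crux, namely the regime where both parts are large. First, you never quantify the reserve there: ``the slack remains strictly positive'' multiplied by a doubly exponential factor does not yield $\ge 1$; one must actually prove an inequality of the shape $677^{-\frac{n-1}{16}}\le (n_1-1)\sum_{i=k_1}^{k-1}2^{-\alpha_i}$, and this is precisely where the specific choice $\alpha_i=2^i+i+1$ and the fact $677^{1/16}<2^{1/2}$ enter (the paper's chains~\eqref{eq:14} and~\eqref{eq:15}, which also force a further case split on whether $n\le 2^k+2$ or $n\ge 2^k+3$). Second, the source of slack you propose for this regime does not work. You say the reserve at ``larger tight points'' comes from the gap in Theorem~\ref{thm:old} \emph{rather than} from an index comparison; but Theorem~\ref{thm:old} bounds only $\lambda_k$, the balanced construction, and the identity $P(2^k+1)=\lambda_k$ is verified only for small $k$ (the remark after~\eqref{eq:1} shows balanced splits need not be optimal), so it yields no bound on $P(n_1)$ for a general large $n_1$. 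The reserve that actually closes the induction for $n_1\ge 17$ is exactly the index comparison you disavow: since $n_1\le 2^k$, the inductive bound on $P(n_1)$ carries the shorter expansion $\sum_{i=4}^{k_1-1}$ with $k_1\le k-1$, and the unused terms give a reserve of at least $(n_1-1)\,2^{-\alpha_{k_1}}\ge 2^{k_1}\cdot 2^{-2^{k_1}-k_1-1}=2^{-2^{k_1}-1}\ge 2^{-\frac{n-1}{2}}\ge 677^{-\frac{n-1}{16}}$. Without this quantitative step your argument does not go through at the binding near-balanced splits.
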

\begin{proof}
We prove the inequality by induction on $n$. The base cases $2 \leq n \leq 32$
are satisfied; this is verified by direct calculation in
Facts~\ref{fact:basis1} and~\ref{fact:basis2} of Appendix~\ref{sec:numeric}:
\begin{align*}
\max_{2\leq n\leq 16} P(n)^{\frac{1}{n-1}} &= P(9)^{1/8} =26^{1/8} =1.50269\ldots. \\
\max_{17\leq n\leq 32} P(n)^{\frac{1}{n-1}} &= P(17)^{1/16} =677^{1/16} =1.50283\ldots.
\end{align*}
Since $0 \leq k \leq 4$, it follows that
$$ \max_{2\leq n\leq 32} P(n)^{\frac{1}{n-1}} = P(17)^{1/16} =677^{1/16}
= 677^{1/16} \exp{ \left( \sum_{i=4}^{k-1} 2^{-\alpha_i} \right)}. $$

Assume now that $n \geq 33$, hence $k \geq 5$, and that
the required inequality holds for all smaller values.
We will show that for all pairs $n_1,n_2 \geq 2$ with $n_1+n_2=n+1$,
the expression $P(n_1) \, P(n_2)+1$ is bounded from above as required.
Note that since  $n_1+n_2=n+1$, we have $n_1,n_2 \leq n-1$, so using
the induction hypothesis for $n_1$ and $n_2$ is justified.
It suffices to consider pairs with $n_1 \leq n_2$.
We distinguish two cases:

\medskip
\emph{Case 1: $2 \leq n_1 \leq 16$.}
Since $n \geq 33$, we have $18 \leq n_2 \leq n-1$.
By the induction hypothesis we have
$$ P(n_2)^{1/(n_2-1)}  \leq
677^{1/16} \exp{ \left( \sum_{i=4}^{k-1} 2^{-\alpha_i} \right)}. $$
Further,
\begin{align*}
P(n) &\leq P(n_1) \, P(n_2) + 1 \\
&\leq P(n_1) \, 677^{\frac{n_2-1}{16}} \exp{ \left( (n_2-1) \sum_{i=4}^{k-1}
  2^{-\alpha_i} \right)} +1 \\
&\leq P(n_1) \, 677^{\frac{n_2-1}{16}} \exp{ \left( (n_2-1) \sum_{i=4}^{k-1} 2^{-\alpha_i} \right)}
\left( 1 + (P(n_1))^{-1} \, 677^{-\frac{n_2-1}{16}} \right) \\
&\leq P(n_1) \, 677^{\frac{n_2-1}{16}} \exp{ \left( (n_2-1) \sum_{i=4}^{k-1} 2^{-\alpha_i} \right)}
\exp{ \left( (P(n_1))^{-1} \, 677^{-\frac{n_2-1}{16}} \right)}.
\end{align*}

To settle Case 1, it suffices to show that
$$ P(n_1) \, 677^{\frac{n_2-1}{16}} \exp{ \left( (n_2-1) \sum_{i=4}^{k-1} 2^{-\alpha_i} \right)}
\exp{ \left( (P(n_1))^{-1} \, 677^{-\frac{n_2-1}{16}} \right)} \leq $$
$$\leq  677^{\frac{n-1}{16}} \exp{ \left( (n-1) \sum_{i=4}^{k-1}
  2^{-\alpha_i} \right)}, $$
or equivalently,
\begin{equation} \label{eq:5}
P(n_1) \, \exp{ \left( (P(n_1))^{-1} \, 677^{-\frac{n_2-1}{16}} \right)}
\leq 677^{\frac{n_1-1}{16}} \exp{ \left( (n_1-1) \sum_{i=4}^{k-1} 2^{-\alpha_i} \right)}.
\end{equation}

We have $n_1 + n_2=n+1$, hence $n_2-1=n-n_1 \geq 33 -n_1$.
By Fact~\ref{fact:Case1} in Appendix~\ref{sec:numeric}, the following inequality
holds for $2 \leq n_1 \leq 16$:
\begin{equation} \label{eq:12}
P(n_1) \, \exp{ \left( (P(n_1))^{-1} \, 677^{-\frac{33-n_1}{16}} \right)}
\leq 677^{\frac{n_1-1}{16}}.
\end{equation}

Now~\eqref{eq:12} in conjunction with $n_2-1 \geq 33 -n_1$ yields
\begin{align*}
P(n_1) \, \exp{ \left( (P(n_1))^{-1} \, 677^{-\frac{n_2-1}{16}} \right)}
&\leq P(n_1) \, \exp{ \left( (P(n_1))^{-1} \, 677^{-\frac{33-n_1}{16}} \right)}\\
&\leq 677^{\frac{n_1-1}{16}} \leq
677^{\frac{n_1-1}{16}} \exp{ \left( (n_1-1) \sum_{i=4}^{k-1} 2^{-\alpha_i} \right)},
\end{align*}
as required by~\eqref{eq:5}.

\medskip
\emph{Case 2: $n_1 \geq 17$.}
We distinguish two subcases, $n \leq 2^k +2$ and $n \geq 2^k +3$.

\smallskip
\emph{Case 2.a: $n \leq 2^k +2$.}
Since $n_1 \geq 17 \geq 3$ it follows that $n_2 \leq 2^k$ and thus the
inductive upper bound on $P(n_2)^{\frac{1}{n_2-1}}$ has a shorter
expansion (up to $k-2$):
\begin{align*}
P(n_2)^{\frac{1}{n_2-1}}  &\leq 677^{1/16} \exp{ \left( \sum_{i=4}^{k-2}
  2^{-\alpha_i} \right)},
\text{ or equivalently, } \\
P(n_2) &\leq 677^{\frac{n_2-1}{16}} \exp{ \left( (n_2-1) \sum_{i=4}^{k-2}
  2^{-\alpha_i} \right)}.
\end{align*}

Since $n_1 \leq n_2$, the same holds for $P(n_1)^{\frac{1}{n_1-1}}$:
\begin{align*}
P(n_1)^{\frac{1}{n_1-1}} &\leq 677^{1/16} \exp{ \left( \sum_{i=4}^{k-2}
  2^{-\alpha_i} \right)},
\text{ or equivalently, } \\
P(n_1) &\leq 677^{\frac{n_1-1}{16}} \exp{ \left( (n_1-1) \sum_{i=4}^{k-2}
  2^{-\alpha_i} \right)}.
\end{align*}

Since $n_1+n_2=n+1$, putting these two inequalities together yields:
\begin{align*}
P(n_1)  P(n_2) + 1
&\leq 677^{\frac{n-1}{16}} \exp{ \left( (n-1) \sum_{i=4}^{k-2} 2^{-\alpha_i} \right)} +1 \\
&\leq 677^{\frac{n-1}{16}} \exp{ \left( (n-1) \sum_{i=4}^{k-2} 2^{-\alpha_i} \right)}
\left( 1 + 677^{-\frac{n-1}{16}} \right) \\
&\leq 677^{\frac{n-1}{16}} \exp{ \left( (n-1) \sum_{i=4}^{k-2} 2^{-\alpha_i} \right)}
\exp{ \left( 677^{-\frac{n-1}{16}} \right)}. \\
\end{align*}

To settle Case 2.a, it suffices to show the following.
\begin{equation} \label{eq:3}
677^{\frac{n-1}{16}} \exp{ \left( (n-1) \sum_{i=4}^{k-2} 2^{-\alpha_i} \right)}
\exp{ \left( 677^{-\frac{n-1}{16}} \right)}
\leq 677^{\frac{n-1}{16}} \exp{ \left( (n-1) \sum_{i=4}^{k-1}
  2^{-\alpha_i} \right)}.
\end{equation}
Recall that $k \geq 5$ and this inequality is needed here; for $k \leq 4$,
the second factors on the left and the right side of \eqref{eq:3} are both equal to $1$,
and so~\eqref{eq:3} would not hold. Note that \eqref{eq:3} is equivalent to
\begin{align*}
\exp{ \left( (n-1) \sum_{i=4}^{k-2} 2^{-\alpha_i} \right)}
\exp{ \left( 677^{-\frac{n-1}{16}} \right)}
&\leq \exp{ \left( (n-1) \sum_{i=4}^{k-1} 2^{-\alpha_i} \right)},\\
\exp{ \left( 677^{-\frac{n-1}{16}} \right)}
&\leq \exp{ \left( (n-1) 2^{-\alpha_{k-1}} \right)},\\
677^{-\frac{n-1}{16}} &\leq (n-1) 2^{-\alpha_{k-1}}.
\end{align*}

Recall that $\alpha_{k-1}=2^{k-1}+k$; we also have $n-1 \geq 2^k$,
hence $ \frac{n-1}{2} \geq 2^{k-1}$. These relations yield
\begin{equation} \label{eq:14}
(n-1) 2^{-\alpha_{k-1}} = \frac{n-1}{2^{\alpha_{k-1}}} \geq \frac{2^k}{2^{\alpha_{k-1}}}=
    \frac{1}{2^{2^{k-1}}} \geq \frac{1}{2^{\frac{n-1}{2}}} \geq
    \frac{1}{677^{\frac{n-1}{16}}} ,
\end{equation}
as required.

\smallskip
\emph{Case 2.b:  $n \geq 2^k +3$.}
Assume that $2^{k_1}+1 \leq n_1 \leq 2^{k_1+1}$ for a suitable
$4 \leq k_1 \leq k$; indeed, $n_1 \geq 17$ implies $k_1 \geq 4$.
If we would have $k_1=k$ then $n_2 \geq n_1 \geq 2^k +1$ hence
$n_1 + n_2 \geq 2^{k+1} +2$, or $n \geq 2^{k+1} +1$, in contradiction
to the original assumption on $n$ in the theorem. It follows that $k_1 \leq k-1$,
and further that $n_1 \leq 2^{k_1+1} \leq 2^k$ and $n \geq 2^{k_1+1}+3$.
The inductive upper bound on $P(n_1)^{\frac{1}{n_1-1}}$ has the expansion:
\begin{align*}
P(n_1)^{\frac{1}{n_1-1}}   &\leq 677^{1/16} \exp{ \left( \sum_{i=4}^{k_1-1}
  2^{-\alpha_i} \right)},
\text{ or equivalently, } \\
P(n_1) &\leq 677^{\frac{n_1-1}{16}} \exp{ \left( (n_1-1) \sum_{i=4}^{k_1-1}
  2^{-\alpha_i} \right)}.
\end{align*}

By the inductive assumption we also have
\begin{align*}
P(n_2)^{\frac{1}{n_2-1}}  &\leq 677^{1/16} \exp{ \left( \sum_{i=4}^{k-1}
  2^{-\alpha_i} \right)},
\text{ or equivalently, } \\
P(n_2) &\leq 677^{\frac{n_2-1}{16}} \exp{ \left( (n_2-1) \sum_{i=4}^{k-1}
  2^{-\alpha_i} \right)}.
\end{align*}
Recall that $k_1 \geq 4$. Since $n_1+n_2=n+1$, putting these two
inequalities together yields:
\begin{align*}
P(n_1)  P(n_2) + 1
&\leq 677^{\frac{n-1}{16}} \exp{ \left( (n-1) \sum_{i=4}^{k_1-1} 2^{-\alpha_i}
+ (n_2-1) \sum_{i=k_1}^{k-1} 2^{-\alpha_i} \right)} +1 \\
&\leq  677^{\frac{n-1}{16}} \exp{ \left( (n-1) \sum_{i=4}^{k_1-1} 2^{-\alpha_i}
+ (n_2-1) \sum_{i=k_1}^{k-1} 2^{-\alpha_i} \right)} \left( 1 + 677^{-\frac{n-1}{16}} \right) \\
&\leq  677^{\frac{n-1}{16}} \exp{ \left( (n-1) \sum_{i=4}^{k_1-1} 2^{-\alpha_i}
+ (n_2-1) \sum_{i=k_1}^{k-1} 2^{-\alpha_i} \right)} \exp{ \left( 677^{-\frac{n-1}{16}} \right)}.
\end{align*}

To settle Case 2.b, it suffices to show that
\begin{align*}
& 677^{\frac{n-1}{16}} \exp{ \left( (n-1) \sum_{i=4}^{k_1-1} 2^{-\alpha_i}
+ (n_2-1) \sum_{i=k_1}^{k-1} 2^{-\alpha_i} \right)} \exp{ \left( 677^{-\frac{n-1}{16}} \right)}
\leq \\
& 677^{\frac{n-1}{16}} \exp{ \left( (n-1) \sum_{i=4}^{k-1} 2^{-\alpha_i} \right)},
\end{align*}
or equivalently,
\begin{align*}
\exp{ \left( (n_2-1) \sum_{i=k_1}^{k-1} 2^{-\alpha_i} \right)}
\exp{ \left( 677^{-\frac{n-1}{16}} \right)}
&\leq \exp{ \left( (n-1) \sum_{i=k_1}^{k-1} 2^{-\alpha_i} \right)},\\
\exp{ \left( 677^{-\frac{n-1}{16}} \right)}
&\leq \exp{ \left( (n-n_2) \sum_{i=k_1}^{k-1} 2^{-\alpha_i} \right)},\\
677^{-\frac{n-1}{16}} &\leq (n_1-1) \sum_{i=k_1}^{k-1} 2^{-\alpha_i}.
\end{align*}
Recall that $\alpha_{k_1}=2^{k_1}+k_1+1$ and that $n_1 -1 \geq 2^{k_1}$
by the assumption of Case 2.b; we also have (by the same reasons):
\begin{align*}
  n \geq 2^{k_1+1}+3 ~~~&\Rightarrow~~~ \frac{n-1}{2} \geq 2^{k_1}+1,\\
  k_1 \leq k-1 ~~~&\Rightarrow~~~ 2^{-\alpha_{k_1}} \leq \sum_{i=k_1}^{k-1} 2^{-\alpha_i}.
\end{align*}

From these relations we deduce that
\begin{equation} \label{eq:15}
\frac{1}{677^{\frac{n-1}{16}}} \leq
\frac{1}{2^{\frac{n-1}{2}}} \leq \frac{1}{2^{2^{k_1}+1}} =
\frac{2^{k_1}}{2^{\alpha_{k_1}}} \leq \frac{n_1-1}{2^{\alpha_{k_1}}}
\leq (n_1-1) \sum_{i=k_1}^{k-1} 2^{-\alpha_i},
\end{equation}
as required.
\end{proof}

\begin{corollary} \label{cor:1}
$P(n)=O(1.50284^n)$.
\end{corollary}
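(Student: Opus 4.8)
The plan is to chain together the three structural results already established and then absorb a polynomial prefactor into the exponential base. First I would combine the two reductions. Theorem~\ref{thm:cx} gives $C(n) \le (2n-5)\,C_\x(n)$, and Lemma~\ref{lem:leaf} gives $C_\x(n) \le n\,P(n)$; multiplying these yields
\begin{equation*}
C(n) \le (2n-5)\,n\,P(n) = O(n^2\,P(n)).
\end{equation*}
Thus the problem reduces, up to a quadratic factor, to an exponential bound on the number of convex paths $P(n)$.

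Second, I would feed in the bound on $P(n)$ from Theorem~\ref{thm:recurrence}. For $n$ with $2^k+1 \le n \le 2^{k+1}$ that theorem gives, after extending the finite sum to its convergent tail (all terms $2^{-\alpha_i}$ being positive),
\begin{equation*}
P(n) \le \left(677^{1/16}\exp\Bigl(\sum_{i=4}^{k-1} 2^{-\alpha_i}\Bigr)\right)^{n-1}
\le \left(677^{1/16}\exp\Bigl(\sum_{i=4}^{\infty} 2^{-\alpha_i}\Bigr)\right)^{n-1}.
\end{equation*}
The same numerical estimate used at the end of the proof of Theorem~\ref{thm:old} (namely Fact~\ref{fact:2}) bounds the base by $677^{1/16}\exp\bigl(\sum_{i=4}^{\infty} 2^{-\alpha_i}\bigr) \le 1.50284$, so $P(n) \le 1.50284^{\,n-1} \le 1.50284^{\,n}$ for every $n$. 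Combined with the previous display this gives $C(n) = O(n^2\,1.50284^{\,n})$.

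The last step, and the only genuinely delicate point, is to absorb the polynomial prefactor into a microscopic increase of the base: for any $\varepsilon>0$ one has $n^2 \le (1+\varepsilon)^n$ once $n$ is large enough, and here the slack satisfies $1.50285/1.50284 = 1+\varepsilon$ with $\varepsilon \approx 6.7\cdot 10^{-6}>0$, so
\begin{equation*}
n^2\,1.50284^{\,n} = O\bigl((1.50284\cdot(1+\varepsilon))^{\,n}\bigr) = O(1.50285^{\,n}).
\end{equation*}
This is precisely where the otherwise cosmetic gap between the constant $1.50284$ appearing in Theorems~\ref{thm:old} and~\ref{thm:recurrence} and the constant $1.50285$ in the corollary is used: it buys exactly the room needed to swallow the $O(n^2)$ factor. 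I expect no serious obstacle beyond verifying that this exponential slack dominates the quadratic factor, which is a routine comparison of $2\ln n$ with $n\ln(1+\varepsilon)$.
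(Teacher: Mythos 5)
Your proposal is correct and follows exactly the same route as the paper: chain Theorem~\ref{thm:cx} with Lemma~\ref{lem:leaf} to get $C(n)\le (2n-5)\,n\,P(n)$, bound $P(n)\le 1.50284^{\,n}$ via Theorem~\ref{thm:recurrence} and Fact~\ref{fact:2}, and absorb the quadratic prefactor into the slightly larger base $1.50285$. Your explicit justification of that last absorption step is slightly more detailed than the paper's, which simply asserts $2n^2\cdot 1.50284^n=O(1.50285^n)$, but the argument is identical in substance.
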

\begin{proof}
By Theorem~\ref{thm:recurrence} and Fact~\ref{fact:2} in Appendix~\ref{sec:numeric})
we obtain
\begin{equation*}
P(n)^{\frac{1}{n}}  \leq P(n)^{\frac{1}{n-1}}
\leq 677^{1/16} \exp{ \left( \sum_{i=4}^{k-1} 2^{-\alpha_i} \right)}
\leq  677^{1/16} \exp{ \left( \sum_{i=4}^{\infty} 2^{-\alpha_i} \right)}
\leq 1.50284.
\qedhere
\end{equation*}
\end{proof}

\paragraph{Proof of Theorem~\ref{thm:main}.}
 By Corollary~\ref{cor:1} we have $C_\x(n) = O(n P(n))$
 and by Theorem~\ref{thm:cx} we have $C(n) \leq (2n-5) \, C_\x(n)$.
  It follows that
$$ C(n) \leq (2n-5) \, C_\x(n) \leq 2n^2 \, P(n) \leq 2n^2 \cdot 1.50284^n = O(1.50285^n), $$
  as required.
  \qed

\section{Counting algorithm} \label{sec:algo}

The number of crossing-free structures (matchings, spanning trees,
spanning cycles, triangulations) on a set of $n$ points in the plane is known to be
exponential in $n$~\cite{DSST13,GNT00,RSW08,SS13,SSW12,SW06a}.
It is a challenging problem to determine the number of configurations faster than
listing all such configurations (\ie, count faster than enumerate)~\cite{ABRS15}.
Exponential-time algorithms have been developed for triangulations~\cite{AS13},
planar graphs~\cite{RW11}, and matchings~\cite{Wet14} that count these structures
exponentially faster than the number of structures.
Recently, it has been shown that the number of triangulations on $n$ points
in the plane can be counted in subexponential time~\cite{MM16},
and this result extends to counting noncrossing prefect matchings, spanning trees,
spanning cycles, 3-regular graphs, and more.

Here we show that given a plane straight-line graph $G$ with $n$ vertices
(\eg, a triangulation), convex polygons in $G$ can be counted in polynomial time.

\paragraph{Proof of Theorem~\ref{thm:algo}.}
Let $G=(V,E)$ be a plane straight line graph. For counting and enumerating
convex cycles (\ie, polygons) in $G$, we adapt a dynamic programming approach by
Eppstein~\etal~\cite{EORW92}, originally developed for finding
subsets in convex position of an $n$-element point set in the plane
optimizing various parameters, \eg, the area or the perimeter of the convex hull.
The dynamic program relies on the following observations:

\begin{enumerate} \itemsep 1pt
\item Assume, by rotating $G$ if necessary, that no two vertices have the same
$x$- or $y$-coordinates. Denote the vertices of $G$ by
$V=\{v_1,\ldots , v_n\}$, ordered by their $x$-coordinates.
Every convex polygon $\xi$ has a leftmost vertex $v_i$ and a rightmost vertex $v_k$.
The points $v_i$ and $v_k$ decompose $\xi$ into two $x$-monotone chains in $G$:
a convex (lower) chain and a concave (upper) chain connecting $v_i$ and $v_k$;
refer to Fig.~\ref{fig:algorithm}.
\item Conversely, the union of any convex and concave chains between vertices $v_i$ and
$v_k$ form a convex polygon, unless the two chains are equal, which happens
when both chains are the one-edge chain $v_iv_k$.
For $1\leq i<k\leq n$, denote by $\tcup(i,k)$ and $\tcap(i,k)$,
respectively, the number of $x$-monotone convex and concave chains between $v_i$ and $v_k$.
\item The number of convex polygons in $G$ whose leftmost and rightmost vertex, respectively,
are $v_i$ and $v_k$ is $\tcup(i,k) \cdot \tcap(i,k)$ if $v_iv_k\notin E$,
and $\tcup(i,k) \cdot \tcap(i,k)-1$ otherwise. Consequently, it is enough
to compute $\tcup(i,k)$ and $\tcap(i,k)$ for all $1\leq i<k\leq n$ in $O(n^2)$ time.
We consider $\tcup(i,k)$ only, the case of $\tcap(i,k)$ is analogous.
\end{enumerate}

\begin{figure}[hbtp]
\centering
\includegraphics[width=.39\textwidth]{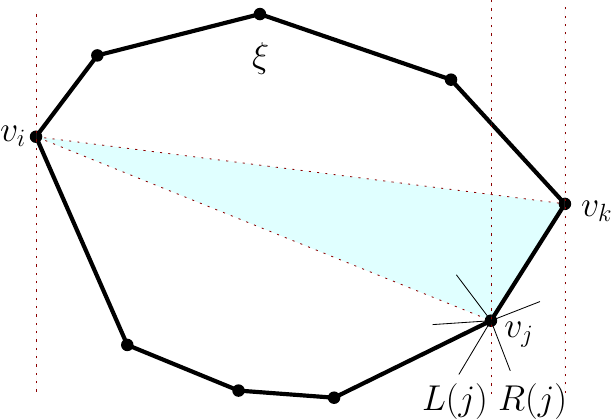}
\caption{A convex polygon where $v_i$ is the leftmost vertex, $v_k$ is the rightmost vertex
and $v_j$ is the penultimate vertex on the lower (convex) chain between $v_i$ and $v_k$.
The edges in $L(j)$ and $R(j)$, whose right and left endpoint, respectively, is $v_j$
are shown partially in the neighborhood of $v_j$.}
\label{fig:algorithm}
\end{figure}

We introduce a third parameter. For $1\leq i\leq j<k\leq n$ where $v_jv_k\in E$,
let $\tcup(i,j,k)$ be the number of $x$-monotone convex chain between $v_i$ and $v_k$
whose rightmost edge is $v_jv_k$. Since $G$ is planar, the number of triples
$1\leq i\leq j<k\leq n$, where $v_jv_k\in E$, is bounded from above by $|V|\cdot |E| = O(n^2)$.
Further, for each vertex $v_j\in V$, we partition the incident edges into two subsets:
let $L(j)$ be the set of indices $\ell$ such that $v_\ell v_j \in E$ and $v_\ell$
is the \emph{left} endpoint of $v_\ell v_j$;
and let $R(j)$ be the set of indices $k$ such that
$v_jv_k\in E$ and $v_k$ is the \emph{right} endpoint of $v_jv_k$.
Note that for all $1\leq i<k\leq n$ we have
\begin{equation}\label{eq:ijk}
\tcup(i,k)=\sum_{j\in L(k):i\leq j} \tcup(i,j,k).
\end{equation}

We compute the values $\tcup(i,j,k)$ in $O(n^2)$ time by dynamic programming;
the entries are computed in increasing order of $j-i$.
In a preprocessing step, we sort the indices $\ell\in L(j)$ by $\slope(v_\ell v_j)$,
and analogously the indices $k\in R(j)$ by $\slope(v_jv_k)$, for all $j=1,\ldots , n$.
This takes
$O(\sum_{j=1}^n \deg(v_j) \log \deg(v_j)) =O(\sum_{j=1}^n \deg(v_j)\log n) =O(n\log n)$ time.

Fix an index $i$, $1\leq i\leq n-1$.
For every $j\in \{i,\ldots, n-1\}$ and $k\in R(j)$, we compute
$\tcup(i,j,k)$ in $O(n)$ time in two nested loops.
For $j=i$ and all $k\in R(j)$, put $\tcup(i,j,k)=1$.
Consider next the values $j=i+1, \ldots, n$: if an $x$-monotone convex chain between
$v_i$ and $v_k$ contains the edge $v_jv_k$, and its penultimate edge is $v_\ell v_j$,
then $\slope(v_\ell v_j)<\slope(v_jv_k)$.
Consequently, for every $k\in R(j)$, we have
\begin{equation}\label{eq:slopes}
\tcup(i,j,k) = \sum_{\small \begin{array}{c}\ell\in L(j):
    i\leq \ell \mbox{ \rm and }\\ \slope(v_\ell v_j)<\slope(v_jv_k)\end{array}} \tcup(i,\ell,j).
\end{equation}
We can compute $\tcup(i,j,k)$ for all $k\in R(j)$ using \eqref{eq:slopes},
since $\tcup(i,\ell,j)$ for all $\ell=i,\ldots , j-1$ has already been computed.
If $k,k'\in R(j)$ and $\slope(v_jv_k)< \slope(v_jv_{k'})$, then
\begin{equation}\label{eq:slopes2}
\tcup(i,j,k') = \tcup(i,j,k)+ \sum_{\small \begin{array}{c}\ell\in L(j):
    i\leq \ell \mbox{ \rm and }\\
    \slope(v_jv_k< \slope(v_\ell v_j)<\slope(v_jv_{k'}) \end{array}} \tcup(i,\ell,j).
\end{equation}
Since $L(j)$ and $R(j)$ are ordered by $\slope(v_\ell v_j)$ and $\slope(v_jv_k)$, respectively,
we can use \eqref{eq:slopes2} to compute $\tcup(i,j,k)$ for all $k\in R(j)$ in this order
in $O(\deg(v_j))$ time. The running time for a fixed index $i$, $1\leq i\leq n-1$, is
$O(\sum_{j=i}^{n-1} \deg(v_j)) =O(\sum_{j=1}^n \deg(v_j)) = O(2|E|)=O(n)$.
The overall running time, over all $i=1,\ldots , n-1$, is $O(n^2)$, as claimed.

To enumerate all convex polygons in $G$, we compute the \emph{set} of convex and concave
chains corresponding to the values $\tcup(i,k)$ and $\tcap(i,k)$ for all pairs $(i,k)$,
$1\leq i<k\leq n$, where $\tcup(i,k)\cdot \tcap(i,k)\neq 0$ (the remaining pairs
do not contribute any convex polygons). For any such pair $(i,k)$, we enumerate all convex
polygons with leftmost vertex $v_i$ and rightmost vertex $v_k$ by reporting all combinations
of $x$-monotone convex chains and $x$-monotone concave chains between $v_i$ and $v_k$, except
for the possible combination of two single-edge chains when $v_iv_k\in E$.

To this end, we first prune the recursion tree on $\tcup(i,j,k)$ (resp., $\tcap(i,j,k)$)
induced by the recursion formula~\eqref{eq:slopes2}. Construct both recursion trees
(for $\tcup(i,j,k)$ and $\tcap(i,j,k)$).
In a top-down traversal, mark all nodes that contribute a nonzero value in the recursive
computation of $\tcup(i,k)$ and $\tcap(i,k)$, where $\tcup(i,k) \cdot \tcap(i,k)\neq 0$.
For these pairs $(i,k)$, $1\leq i<k\leq n$, we then compute the corresponding
\emph{set} of convex (resp., concave) chains by tracing back the recursion tree
and concatenating edges one-by-one in $O(1)$ time per edge (cf.~\cite[p.~387]{CLRS09}).
\qed

\paragraph{Acknowledgment.} The authors are grateful to an anonymous reviewer
for a very careful reading of the manuscript and for pertinent remarks.
In particular, the improvements suggested by the reviewer concerning the
counting algorithm in Section~\ref{sec:algo} were received with great appreciation.

\appendix

\section{Numeric calculations} \label{sec:numeric}

We need the following numerical estimates.

\begin{fact} \label{fact:1}
The following inequality holds:
$$ 2^{1/2} \exp{ \left( \sum_{i=1}^\infty 2^{-\alpha_i} \right)} \leq 1.5180. $$
\end{fact}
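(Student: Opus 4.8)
The plan is to bound the infinite sum $S=\sum_{i=1}^\infty 2^{-\alpha_i}$ from above by splitting it into a short finite head, which I evaluate exactly, and a rapidly converging tail, which I dominate by a geometric series; then I apply the monotonicity of $\exp$ together with a rational overestimate of $2^{1/2}$. Since $\alpha_i=2^i+i+1$, the exponents grow doubly-exponentially, so this split is extremely efficient: almost all of the mass sits in the first two or three terms.

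First I would keep the first three terms exactly, obtaining a single rational number:
\[
\sum_{i=1}^{3} 2^{-\alpha_i} = \frac{1}{16}+\frac{1}{128}+\frac{1}{4096} = \frac{289}{4096}.
\]
For the tail $\sum_{i\ge 4} 2^{-\alpha_i}$, the key observation is that the gaps $\alpha_{i+1}-\alpha_i = 2^i+1$ are strictly increasing, so for $i\ge 4$ consecutive terms shrink by a factor of at least $2^{-(2^4+1)}=2^{-17}$. Dominating the tail by the corresponding geometric series gives
\[
\sum_{i=4}^\infty 2^{-\alpha_i} \le 2^{-\alpha_4}\sum_{j\ge 0}2^{-17 j} = \frac{2^{-21}}{1-2^{-17}} < 2^{-20}.
\]

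Combining the two pieces yields $S \le \tfrac{289}{4096}+2^{-20} < 0.070558$. By monotonicity of $\exp$ this gives $\exp(S) \le \exp(0.070558) < 1.07311$, and using the rational bound $2^{1/2} < 1.41422$ I would conclude
\[
2^{1/2}\exp(S) < 1.41422 \cdot 1.07311 < 1.5180,
\]
which is the claimed inequality.

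The only point requiring care—hardly a genuine obstacle—is keeping enough precision to certify the fourth decimal: the left-hand side lands near $1.5176$, leaving a margin of only a few units in the fourth place against the target $1.5180$. Accordingly I would compute the head exactly rather than rounding it down, and evaluate the exponential with a verified bound. The geometric tail estimate, by contrast, can be deliberately crude, since the entire tail contributes less than $10^{-6}$ and therefore cannot influence the decision.
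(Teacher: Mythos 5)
Your proof is correct and follows essentially the same route as the paper: split $\sum_{i\ge 1}2^{-\alpha_i}$ into an exactly evaluated head plus a geometric-series bound on the doubly-exponentially decaying tail, then apply monotonicity of $\exp$ (the paper keeps two head terms and bounds the tail by $2^{-11}$; you keep three and bound it by $2^{-20}$, which only tightens the estimate). All the numerical steps check out, so nothing further is needed.
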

\begin{proof}
An easy calculation yields an upper bound
on the sum $\sum_{i=1}^\infty 2^{-\alpha_i}$:
\begin{align*}
\sum_{i=1}^{\infty} 2^{-\alpha_i}
&= 2^{-4} + 2^{-7} + 2^{-12} + 2^{-21} + \ldots \\
&\leq 2^{-4} + 2^{-7} + \sum_{i=1}^{\infty} 2^{-11-i} =
2^{-4} + 2^{-7} + 2^{-11}.
\end{align*}
It follows that
$$ 2^{1/2} \exp{ \left( \sum_{i=1}^\infty 2^{-\alpha_i} \right)}
\leq 2^{1/2} \exp{ \left( 2^{-4} + 2^{-7} + 2^{-11} \right) } \leq 1.5180, $$
as required.
\end{proof}

\begin{fact} \label{fact:2}
The following inequality holds:
$$ 677^{1/16} \exp{ \left( \sum_{i=4}^\infty 2^{-\alpha_i} \right)} \leq 1.50284. $$
\end{fact}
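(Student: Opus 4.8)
The plan is to follow the template of Fact~\ref{fact:1}: first pin down a clean rational upper bound for the infinite sum $\sum_{i=4}^\infty 2^{-\alpha_i}$, then feed this into the exponential and multiply by a rigorously verified upper estimate of $677^{1/16}$. Recall that $\alpha_i = 2^i + i + 1$, so the sum begins $2^{-21} + 2^{-38} + 2^{-71} + \cdots$ and is utterly dominated by its leading term. I would keep $2^{-\alpha_4} = 2^{-21}$ explicitly and bound the remaining tail by a geometric series: since $\alpha_{i+1} - \alpha_i = 2^i + 1 \geq 1$ for every $i$, consecutive terms decrease at least geometrically with ratio $\tfrac12$, whence
\begin{equation*}
\sum_{i=4}^\infty 2^{-\alpha_i} \;\leq\; 2^{-21} + 2^{-38}\sum_{j=0}^\infty 2^{-j} \;=\; 2^{-21} + 2^{-37} \;<\; 2^{-20}.
\end{equation*}

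Next I would exponentiate. Because the exponent is tiny, the elementary bound $\e^x \leq 1 + 2x$ (valid for $0 \leq x \leq 1$, hence comfortably at $x \leq 2^{-20}$) gives $\exp\!\left(\sum_{i\geq 4} 2^{-\alpha_i}\right) \leq 1 + 2^{-19}$. It then remains to supply a rigorous upper bound for $677^{1/16}$. I would certify $677^{1/16} < 1.502835$ by checking, through four repeated squarings, that $1.502835^{16} > 677$ (successively forming $1.502835^2$, then its square, and so on). Combining the two pieces,
\begin{equation*}
677^{1/16}\exp\!\left(\sum_{i=4}^\infty 2^{-\alpha_i}\right) \;\leq\; 1.502835\,\bigl(1 + 2^{-19}\bigr) \;<\; 1.50284,
\end{equation*}
where the final inequality is one more elementary multiplication.

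The one genuinely delicate point is that the target constant $1.50284$ sits only marginally above the true value $677^{1/16} \approx 1.5028347$: the slack is only about $5 \times 10^{-6}$, and the exponential correction factor—even bounded crudely by $1 + 2^{-19}$—consumes roughly $3 \times 10^{-6}$ of it. A safe margin remains, but the certified upper bound on $677^{1/16}$ must genuinely lie below $1.502835$, and the closing multiplication must be carried out honestly; rounding $677^{1/16}$ upward to $1.50284$ at the outset would collapse the argument. All other steps are routine, and I expect the repeated-squaring verification of $1.502835^{16} > 677$ to be the main—though still modest—computational effort.
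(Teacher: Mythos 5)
Your strategy is the same as the paper's: bound $\sum_{i\ge 4}2^{-\alpha_i}$ by $2^{-20}$ via a geometric tail (the paper does exactly this) and then verify the resulting one-line numerical inequality, which the paper simply asserts in the form $677^{1/16}\exp(2^{-20})\le 1.50284$ without spelling out the arithmetic. However, your certification step contains a genuine numerical error. In fact $677^{1/16}=1.5028367\ldots$, so the inequality $677^{1/16}<1.502835$ that you propose to certify is \emph{false}: carrying out the four squarings gives $1.502835^2=2.2585130\ldots$, $1.502835^4=5.1008810\ldots$, $1.502835^8=26.018987\ldots$, and $1.502835^{16}=676.9877\ldots<677$, so the check ``$1.502835^{16}>677$'' fails. (Your estimate $677^{1/16}\approx 1.5028347$ is off in the sixth decimal; the available slack below $1.50284$ is only about $3.3\times 10^{-6}$, not $5\times 10^{-6}$.)

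The argument is salvageable with the same structure. One can certify $677^{1/16}<1.502837$ instead; the same repeated squaring gives $1.502837^{16}\approx 677.002>677$. With your crude bound $\e^x\le 1+2x$ this yields $1.502837\,(1+2^{-19})\approx 1.5028399$, which sits below $1.50284$ by only about $10^{-7}$ --- uncomfortably tight for a hand computation. It is safer to use $\e^x\le 1+x+x^2$ at $x=2^{-20}$, which gives
$677^{1/16}\exp\!\left(\sum_{i\ge 4}2^{-\alpha_i}\right)\le 1.502837\,\bigl(1+2^{-20}+2^{-40}\bigr)<1.5028385<1.50284$
with a visible margin. Everything else in your write-up is correct: the tail estimate $\sum_{i\ge 4}2^{-\alpha_i}\le 2^{-21}+2^{-37}<2^{-20}$ is valid (consecutive $\alpha_i$ increase by $2^i+1\ge 1$), and $\e^x\le 1+2x$ does hold on $[0,1]$.
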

\begin{proof}
Similarly to the proof of Fact~\ref{fact:1},
an easy calculation yields an upper bound
on the sum $\sum_{i=4}^\infty 2^{-\alpha_i}$:
$$
\sum_{i=4}^{\infty} 2^{-\alpha_i} = 2^{-21} + 2^{-38} + \ldots
\leq \sum_{i=1}^{\infty} 2^{-20-i} = 2^{-20}.
$$
It follows that
$$ 677^{1/16} \exp{ \left( \sum_{i=4}^\infty 2^{-\alpha_i} \right)}
\leq 677^{1/16} \exp{ \left( 2^{-20} \right) } \leq 1.50284, $$
as required.
\end{proof}

\begin{fact} \label{fact:basis1}
The following holds:
$$ \max_{2\leq n\leq 16} P(n)^{\frac{1}{n-1}} = P(9)^{1/8} =26^{1/8} =1.50269\ldots $$
\end{fact}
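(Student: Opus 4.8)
The plan is to treat this as a finite verification driven by the recurrence~\eqref{eq:1}. First I would compute the values $P(n)$ for $2 \le n \le 16$ by unrolling~\eqref{eq:1} from the bottom up, starting from the base cases $P(2)=1$ and $P(3)=2$, and at each step taking the maximum over all admissible splits $n_1+n_2=n+1$ with $n_1,n_2\ge 2$. This reproduces the first row of Table~\ref{table}; in particular it yields $P(9)=26$, which equals $\lambda_3$ from~\eqref{eq:10} since $9=2^3+1$ and $\lambda_k=P(2^k+1)$ in this range.

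Next I would form the normalized quantities $P(n)^{1/(n-1)}$ for each $n$ in the range and compare them numerically. The sequence is \emph{not} monotone in $n$: for instance $P(9)^{1/8}=26^{1/8}=1.50269\ldots$ exceeds both of its neighbors $P(8)^{1/7}=16^{1/7}=1.48599\ldots$ and $P(10)^{1/9}=36^{1/9}=1.48909\ldots$, and the normalized values continue to rise and fall as $n$ grows, the nearest competitor being $P(13)^{1/12}=131^{1/12}=1.50121\ldots$, which still lies below $26^{1/8}$. Because of this oscillation there is no shortcut via a monotonicity argument; one genuinely must evaluate all fifteen normalized entries and confirm that the largest is the one at $n=9$.

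The only point requiring care --- and the closest thing to an obstacle --- is the faithful evaluation of~\eqref{eq:1}. As the Remark following~\eqref{eq:1} already warns, the maximizing split is not always the balanced one (e.g.\ $P(7)=P(3)P(5)+1>P(4)P(4)+1$), so at each $n$ the full maximum over splits must be taken rather than assuming $n_1=\lfloor (n+1)/2\rfloor$. Once the table is correctly established, the conclusion $\max_{2\le n\le 16}P(n)^{1/(n-1)}=26^{1/8}=1.50269\ldots$ follows by direct inspection of the fifteen values. The location of the maximum at $n=9=2^3+1$ is consistent with the structure of the problem: the peaks of the normalized sequence occur at indices of the form $2^k+1$, where $P(2^k+1)=\lambda_k$; this is also why the companion range $17\le n\le 32$ treated in Fact~\ref{fact:basis2} attains its maximum at $n=17=2^4+1$.
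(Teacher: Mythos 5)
Your proposal is correct and matches the paper's own proof in essence: both compute $P(2),\ldots,P(16)$ by unrolling recurrence~\eqref{eq:1} (taking the true maximum over all splits) and then verify by direct comparison of the fifteen normalized values $P(n)^{1/(n-1)}$ that the maximum occurs at $n=9$. Your additional remarks on the non-monotonicity of the normalized sequence and the location of its peaks at $n=2^k+1$ are accurate but not needed for the verification.
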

\begin{proof}
Using the values of $P(n)$ from recurrence~\eqref{eq:1}, we
verify the following inequalities:
\begin{align*}
P(2) &=1 \text{ and } P(2)^{1/1} =1 \leq 26^{1/8} =1.50269\ldots\\
P(3) &=2 \text{ and } P(3)^{1/2} = 2^{1/2} = 1.4142\ldots \leq 26^{1/8} =1.50269\ldots\\
P(4) &=3 \text{ and } P(4)^{1/3} =3^{1/3} = 1.4422\ldots \leq 26^{1/8} =1.50269\ldots\\
P(5) &=5 \text{ and } P(5)^{1/4} =5^{1/4} = 1.4953\ldots \leq 26^{1/8} =1.50269\ldots\\
P(6) &=7 \text{ and } P(6)^{1/5} =7^{1/5} = 1.4757\ldots \leq 26^{1/8} =1.50269\ldots\\
P(7) &=11 \text{ and } P(7)^{1/6} =11^{1/6} = 1.4913\ldots \leq 26^{1/8} =1.50269\ldots\\
P(8) &=16 \text{ and } P(8)^{1/7} =16^{1/7} = 1.4859\ldots \leq 26^{1/8} =1.50269\ldots\\
P(9) &=26 \text{ and } P(9)^{1/8} =26^{1/8} =1.50269\ldots\\
P(10) &=36 \text{ and } P(10)^{1/9} =36^{1/9} =1.4890\ldots \leq 26^{1/8} =1.50269\ldots\\
P(11) &=56 \text{ and } P(11)^{1/10} =56^{1/10} =1.4956\ldots \leq 26^{1/8} =1.50269\ldots\\
P(12) &=81 \text{ and } P(12)^{1/11} =81^{1/11} =1.4910\ldots \leq 26^{1/8} =1.50269\ldots\\
P(13) &=131 \text{ and } P(13)^{1/12} =131^{1/12} =1.5012\ldots \leq 26^{1/8} =1.50269\ldots\\
P(14) &=183 \text{ and } P(14)^{1/13} =183^{1/13} =1.4929\ldots \leq 26^{1/8} =1.50269\ldots\\
P(15) &=287 \text{ and } P(15)^{1/14} =287^{1/14} =1.4981\ldots \leq 26^{1/8} =1.50269\ldots\\
P(16) &=417 \text{ and } P(16)^{1/15} =417^{1/15} =1.4951\ldots \leq 26^{1/8} =1.50269\ldots
\tag*{\qedhere}
\end{align*}
\end{proof}

\begin{fact} \label{fact:basis2}
The following holds:
$$ \max_{17\leq n\leq 32} P(n)^{\frac{1}{n-1}} = P(17)^{1/16} =677^{1/16} =1.50283\ldots $$
\end{fact}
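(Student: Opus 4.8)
The plan is to imitate Fact~\ref{fact:basis1} verbatim: extend the tabulation of $P(n)$ from $n=18$ up to $n=32$, and then check the sixteen inequalities $P(n)^{1/(n-1)}\leq 677^{1/16}$ for $17\leq n\leq 32$, observing that equality occurs exactly at $n=17$.

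First I would compute $P(19),\ldots,P(32)$ by running the recurrence~\eqref{eq:1} as a dynamic program. For each target $n$, the value $P(n)=\max\{P(n_1)P(n_2)+1: n_1+n_2=n+1,\ 2\leq n_1\leq n_2\}$ depends only on strictly smaller indices, all of which are already available from Table~\ref{table} and from the preceding steps. For example, at $n=19$ the split $(n_1,n_2)=(9,11)$ yields $26\cdot 56+1=1457$, beating the balanced split $(10,10)$ that gives $36^2+1=1297$, so $P(19)=1457$; iterating produces the remaining values up to $P(32)$.

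The second step is the finite list of numerical comparisons. For each $n$ in the range I would evaluate $P(n)^{1/(n-1)}$ and confirm it does not exceed $677^{1/16}=1.50283\ldots$, exactly as in Fact~\ref{fact:basis1}. The quantity oscillates but stays under the threshold; the maximum over the block is attained at the left endpoint $n=17$, consistent with the observation (noted after~\eqref{eq:10}) that $(\lambda_k)^{1/2^k}=P(2^k+1)^{1/2^k}$ is strictly increasing, so that the next local peak only appears at $n=2^5+1=33$, just past the range.

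The calculation is routine, but two points warrant care. As emphasized in the Remark following~\eqref{eq:1}, the maximizing split need not be balanced, so for each $n$ one must scan \emph{all} admissible pairs $(n_1,n_2)$ rather than assume $n_1=\lceil\frac{n+1}{2}\rceil$. Second, the margin is extremely thin at $n=25$, where $P(25)=P(9)P(17)+1=26\cdot 677+1=17603$: here the inequality $17603^{1/24}\leq 677^{1/16}$ is the binding one. To keep this case free of rounding issues I would verify it in exact integer form; raising both sides to the power $48$ reduces it to $P(25)^2\leq 677^3$, i.e.\ $17603^2=309{,}865{,}609\leq 310{,}288{,}733=677^3$, which holds. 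The analogous exact check $P(n)^{16}\leq 677^{\,n-1}$ disposes of every remaining $n$ with comfortable margin, so the expected main obstacle is bookkeeping rather than anything conceptual.
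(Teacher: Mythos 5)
Your proposal is correct and follows essentially the same route as the paper: compute $P(18),\ldots,P(32)$ from recurrence~\eqref{eq:1} and verify the sixteen inequalities $P(n)^{1/(n-1)}\leq 677^{1/16}$ directly. Your extra exact-integer check $17603^2=309{,}865{,}609\leq 310{,}288{,}733=677^3$ for the tightest case $n=25$ is a nice touch that the paper handles only by decimal approximation, but it does not change the nature of the argument.
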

\begin{proof}
Using the values of $P(n)$ from recurrence~\eqref{eq:1}, we
verify the following inequalities:
\begin{align*}
P(17) &=677 \text{ and } P(17)^{1/16} =677^{1/16} =1.50283\ldots\\
P(18) &=937 \text{ and } P(18)^{1/17} =937^{1/17} =1.4955\ldots \leq 677^{1/16} =1.50283\ldots\\
P(19) &=1457 \text{ and } P(19)^{1/18} =1457^{1/18} =1.4988\ldots \leq 677^{1/16} =1.50283\ldots\\
P(20) &=2107 \text{ and } P(20)^{1/19} =2107^{1/19} =1.4959\ldots \leq 677^{1/16} =1.50283\ldots\\
P(21) &=3407 \text{ and } P(21)^{1/20} =3407^{1/20} =1.5018\ldots \leq 677^{1/16} =1.50283\ldots\\
P(22) &=4759 \text{ and } P(22)^{1/21} =4759^{1/21} =1.4966\ldots \leq 677^{1/16} =1.50283\ldots\\
P(23) &=7463 \text{ and } P(23)^{1/22} =7463^{1/22} =1.4998\ldots \leq 677^{1/16} =1.50283\ldots\\
P(24) &=10843 \text{ and } P(24)^{1/23} =10843^{1/23} =1.4977\ldots \leq 677^{1/16} =1.50283\ldots\\
P(25) &=17603 \text{ and } P(25)^{1/24} =17603^{1/24} =1.5027\ldots \leq 677^{1/16} =1.50283\ldots\\
P(26) &=24373 \text{ and } P(26)^{1/25} =24373^{1/25} =1.4978\ldots \leq 677^{1/16} =1.50283\ldots\\
P(27) &=37913 \text{ and } P(27)^{1/26} =37913^{1/26} =1.5000\ldots \leq 677^{1/16} =1.50283\ldots\\
P(28) &=54838 \text{ and } P(28)^{1/27} =54838^{1/27} =1.4980\ldots \leq 677^{1/16} =1.50283\ldots\\
P(29) &=88688 \text{ and } P(29)^{1/28} =88688^{1/28} =1.5021\ldots \leq 677^{1/16} =1.50283\ldots\\
P(30) &=123892 \text{ and } P(30)^{1/29} =123892^{1/29} =1.4983\ldots \leq 677^{1/16} =1.50283\ldots\\
P(31) &=194300 \text{ and } P(31)^{1/30} =194300^{1/30} =1.5006\ldots \leq 677^{1/16} =1.50283\ldots\\
P(32) &=282310 \text{ and } P(32)^{1/31} =282310^{1/31} =1.4990\ldots \leq 677^{1/16} =1.50283\ldots
\tag*{\qedhere}
\end{align*}
\end{proof}

\begin{fact}\label{fact:Case1}
For $2 \leq n \leq 16$, we have
\begin{equation} \label{eq:12+}
P(n) \, \exp{ \left( (P(n))^{-1} \, 677^{-\frac{33-n}{16}} \right)}
\leq 677^{\frac{n-1}{16}}.
\end{equation}
\end{fact}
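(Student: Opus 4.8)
The plan is to collapse \eqref{eq:12+} into a single scalar inequality and then check it over the finite range $2\le n\le 16$ using the tabulated values of $P(n)$ from Table~\ref{table}. The guiding observation is that the exponential factor on the left is only a tiny perturbation above $1$: its argument $(P(n))^{-1}\,677^{-(33-n)/16}$ is positive but minuscule, since $33-n\ge 17$ forces $677^{-(33-n)/16}\le 677^{-17/16}<10^{-3}$ while $P(n)\ge 1$. Thus \eqref{eq:12+} is, in essence, the estimate $P(n)\le 677^{(n-1)/16}$, which already follows from Fact~\ref{fact:basis1} because $26^{1/8}<677^{1/16}$. The real question is only whether the slack provided by Fact~\ref{fact:basis1} survives the multiplication by the exponential factor.

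To make this quantitative I would normalize. Set $t_n=P(n)\,677^{-(n-1)/16}\in(0,1]$ and use the identity $677^{-(33-n)/16}=677^{(n-1)/16}\cdot 677^{-2}$, so that the argument of the exponential equals $(t_n\,677^2)^{-1}$. Dividing \eqref{eq:12+} by $677^{(n-1)/16}$ and taking logarithms then shows it is equivalent to the clean one-variable condition
\begin{equation*}
-\,t_n\ln t_n\ \ge\ 677^{-2}.
\end{equation*}
The function $t\mapsto -t\ln t$ is positive on $(0,1)$ and vanishes at both endpoints, so this inequality can only be threatened when $t_n$ is extremely close to $0$ or to $1$. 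In our range the logarithmic blow-up keeps the quantity large near $0$ (the smallest ratio is $t_2=677^{-1/16}=0.665\ldots$, where $-t_2\ln t_2\approx0.27$), so the only genuine threat is $t_n$ close to $1$, i.e. $P(n)$ nearly equal to $677^{(n-1)/16}$.

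The main, and essentially only, obstacle is thus the tightest case of Fact~\ref{fact:basis1}. Writing $t_n=\bigl(P(n)^{1/(n-1)}/677^{1/16}\bigr)^{\,n-1}$ and recalling that $P(n)^{1/(n-1)}$ attains its maximum $26^{1/8}$ at $n=9$, a direct evaluation of the fifteen ratios shows that $n=9$ produces the $t_n$ closest to $1$. There $P(9)=26$ and $677^{(9-1)/16}=\sqrt{677}=26.019\ldots$, giving $t_9=0.99926\ldots$ and
\begin{equation*}
-\,t_9\ln t_9=7.4\times10^{-4}\ >\ 677^{-2}=2.18\times10^{-6},
\end{equation*}
so the binding case holds with more than two orders of magnitude to spare. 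Every remaining index yields a $t_n$ farther from $1$ and hence a larger value of $-t_n\ln t_n$; confirming the fifteen inequalities numerically completes the proof.
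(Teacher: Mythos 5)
Your proof is correct and follows essentially the same route as the paper: the paper substitutes $x_n=(P(n))^{-1}\,677^{\frac{n-1}{16}}=1/t_n$ and reduces \eqref{eq:12+} to the equivalent scalar condition $\exp(x_n/677^2)\le x_n$, which is exactly the exponentiated form of your $-t_n\ln t_n\ge 677^{-2}$, and then verifies it over the finite range using the bound supplied by Fact~\ref{fact:basis1}. The only cosmetic difference is that the paper disposes of all fifteen indices uniformly via $(677/676)^{1/16}\le x_n\le 677$ and a two-case estimate, rather than isolating the binding case $n=9$ as you do.
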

\begin{proof}
Let
\begin{equation} \label{eq:16}
  x_n =(P(n))^{-1} \, 677^{\frac{n-1}{16}}, \text{ for } 2 \leq n \leq 16.
\end{equation}
Then~\eqref{eq:12+} is equivalent to
\begin{equation} \label{eq:17}
\exp{ \left( \frac{x_n}{677^2} \right)} \leq x_n,  \text{ for } 2 \leq n \leq 16.
\end{equation}

By Fact~\ref{fact:basis1}, we have
$$ P(n)^{\frac{1}{n-1}} \leq P(9)^{1/8} =26^{1/8},  \text{ for } 2 \leq n \leq 16.
$$
and this implies
\begin{equation*}
x_n =(P(n))^{-1} \, 677^{\frac{n-1}{16}} \geq
\frac{677^{\frac{n-1}{16}}}{26^{\frac{n-1}{8}}} =
\left( \frac{677}{676} \right) ^{\frac{n-1}{16}}
\geq \left( \frac{677}{676} \right) ^{\frac{1}{16}} = 1.00009\ldots,
\text{ for } 2 \leq n \leq 16.
\end{equation*}
Obviously, we also have $x_n \leq 677$, for $n=2,\ldots,16$,
thus $x_n$ is bounded as follows:
$$ \left( \frac{677}{676} \right) ^{\frac{1}{16}} \leq x_n
\leq 677, \text{ for } 2 \leq n \leq 16.
$$

To verify~\eqref{eq:17}, we distinguish two cases:

\medskip
\emph{Case 1: $x_n \in \left[ \left( \frac{677}{676} \right) ^{\frac{1}{16}}, 2 \right].$}
Then
$$ \exp{ \left( \frac{x_n}{677^2} \right)} \leq
\exp{ \left( \frac{2}{677^2} \right)} =1.0000043\ldots \leq
\left( \frac{677}{676} \right) ^{\frac{1}{16}} = 1.00009\ldots
\leq x_n, $$
as required by~\eqref{eq:17}.

\medskip
\emph{Case 2: $x_n \in [2,677]$.}
Then
$$ \exp{ \left( \frac{x_n}{677^2} \right)} \leq
\exp{ \left( \frac{677}{677^2} \right)} =
\exp{ \left( \frac{1}{677} \right)} =1.0014\ldots \leq 2 \leq x_n, $$
as required by~\eqref{eq:17}.
\end{proof}


\begin{thebibliography}{99}

\bibitem{AHV+06} O. Aichholzer, T. Hackl, B. Vogtenhuber, C. Huemer,
F. Hurtado, and H. Krasser,
On the number of plane geometric graphs,
\emph{Graphs and Combinatorics} \textbf{23(1)} (2007), 67--84.

\bibitem{ACNS82} M. Ajtai, V. Chv\'atal, M. Newborn, and E. Szemer\'edi,
Crossing-free subgraphs,
\emph{Annals of Discrete Mathematics} \textbf{12} (1982), 9--12.

\bibitem{ABRS15}
V.~Alvarez, K.~Bringmann, S.~Ray, and R.~Seidel,
Counting triangulations and other crossing-free structures approximately,
\emph{Computational Geometry: Theory \& Applications} {\bf 48(5)} (2015), 386--397.

\bibitem{AS13}
V.~Alvarez and R.~Seidel,
A simple aggregative algorithm for counting triangulations of planar
point sets and related problems,
in \emph{Proc. 29th Sympos.on Comput. Geom.} (SOCG), ACM Press, 2013, pp.~1--8.

\bibitem{AKLS11}
B. Aronov, M. van Kreveld, M. L\"offler, and R. I. Silveira,
Peeling meshed potatoes, \emph{Algorithmica} \textbf{60(2)} (2011), 349--367.

\bibitem{BKK+07}
K. Buchin, C. Knauer, K. Kriegel, A. Schulz, and R. Seidel.
On the number of cycles in planar graphs,
in \emph{Proc. 13th Annual International Conference on Computing
and Combinatorics} (COCOON), LNCS~4598, Springer, 2007, pp.~97--107.

\bibitem{CY86}
J. S. Chang and C. K. Yap, A polynomial solution for the potato-peeling problem,
\emph{Discrete \& Computational Geometry} \textbf{1} (1986), 155--182.

\bibitem{CLRS09} T.~H.~Cormen, C.~E.~Leiserson, R.~L.~Rivest, and C.~Stein,
\emph{Introduction to Algorithms},
3rd~edition, MIT Press, Cambridge, 2009.

\bibitem{DLST16}
A. Dumitrescu, M. L\"offler, A. Schulz, and Cs. D. T\'oth,
Counting carambolas,
\emph{Graphs and Combinatorics} \textbf{32(3)} (2016), 923--942.

\bibitem{DMT16}
A. Dumitrescu, R. Mandal, and Cs. D. T\'oth,
Monotone paths in geometric triangulations,
Preprint, \href{https://arxiv.org/abs/1608.04812}{arXiv:1608.04812}, 2016.
An extended abstract of an earlier version in \emph{Proc. 27th International Workshop
  on Combinatorial Algorithms (IWOCA 2016)}, LNCS~9843, pp.~411--422, Springer, 2016.

\bibitem{DSST13}
A. Dumitrescu, A. Schulz, A. Sheffer, and Cs. D. T\'oth,
Bounds on the maximum multiplicity of some common geometric graphs,
\emph{SIAM Journal on Discrete Mathematics} \textbf{27(2)} (2013), 802--826.

\bibitem{DT12}
A. Dumitrescu and Cs. D. T\'oth,
Computational Geometry Column 54,
\emph{SIGACT News Bulletin} \textbf{43(4)} (2012), 90--97.

\bibitem{EORW92}
D.~Eppstein, M.~Overmars, G.~Rote, and G.~Woeginger,
Finding minimum area $k$-gons,
\emph{Discrete \& Computational Geometry} \textbf{7(1)} (1992), 45--58.

\bibitem{Erd78}
P. Erd\H{o}s, Some more problems on elementary geometry,
\emph{Gazette of the Australian Mathematical Society} \textbf{5} (1978), 52--54.

\bibitem{ESz35}
P.~Erd\H{o}s and G.~Szekeres, A combinatorial problem in geometry,
\emph{Compositio Mathematica} \textbf{2} (1935), 463--470.

\bibitem{GNT00} A. Garc\'{\i}a, M. Noy and A. Tejel,
Lower bounds on the number of crossing-free subgraphs of $K_N$,
\emph{Computational Geometry: Theory \& Applications} \textbf{16(4)} (2000), 211--221.

\bibitem{Goo81}
J. E. Goodman,
On the largest convex polygon contained in a non-convex $n$-gon or how to peel a potato,
\emph{Geometria Dedicata} \textbf{11} (1981), 99--106.

\bibitem{KLP12}
M. J. van Kreveld, M. L{\"o}ffler, and J. Pach,
How many potatoes are in a mesh?,
in \emph{Proc. 23rd Internat. Sympos. on Alg. Comput.}
(ISAAC), LNCS~7676, Springer, 2012, pp.~166--176.

\bibitem{MM16}
D. Marx and T. Miltzow,
Peeling and nibbling the cactus:
Subexponential-time algorithms for counting triangulations and related problems,
in \emph{Proc. 32nd International Symposium on Computational Geometry}
(SoCG), LIPIcs~51, Schloss Dagstuhl, 2016, article 52.

\bibitem{MS00}
W. Morris and V. Soltan,
The Erd\H{o}s-Szekeres problem on points in convex position---a survey,
\emph{Bulletin of AMS} \textbf{37} (2000), 437--458.

\bibitem{RSW08}
A.~Razen, J.~Snoeyink, and E.~Welzl,
Number of crossing-free geometric graphs vs. triangulations,
\emph{Electronic Notes in Discrete Mathematics} \textbf{31} (2008), 195--200.

\bibitem{RW11}
A.~Razen and E.~Welzl,
Counting plane graphs with exponential speed-up,
in \emph{Rainbow of Computer Science}, Springer, 2011, pp.~36--46.

\bibitem{SS11}
M. Sharir and A. Sheffer,
Counting triangulations of planar point sets,
{\em The Electronic Journal of Combinatorics} {\bf 18} (2011), P70.

\bibitem{SS13}
M. Sharir and A. Sheffer,
Counting plane graphs: cross-graph charging schemes,
\emph{Combinatorics, Probability {\&} Computing} \textbf{22(6)} (2013), 935--954.

\bibitem{SSW12}
M. Sharir, A. Sheffer, and E. Welzl,
Counting plane graphs: perfect matchings, spanning cycles, and
Kasteleyn's technique,
\emph{Journal of Combinatorial Theory, Ser.~A} \textbf{120(4)} (2013), 777--794.

\bibitem{SW06a} M. Sharir and E. Welzl,
On the number of crossing-free matchings, cycles, and partitions,
\emph{SIAM Journal on Computing} {\bf 36(3)} (2006), 695--720.

\bibitem{She14} A. Sheffer,
Numbers of plane graphs,
\href{https://adamsheffer.wordpress.com/numbers-of-plane-graphs/}
     {https://adamsheffer.wordpress.com/numbers-of-plane-graphs/}
(version of May, 2015).

\bibitem{Wet14} M. Wettstein,
Counting and enumerating crossing-free geometric graphs,
in \emph{Proc. 30th Sympos. on Comput. Geom.} (SOCG), ACM Press, 2014, pp.~1--10.
Full paper available at \href{https://arxiv.org/abs/1604.05350}{arXiv:1604.05350}, 2016.

\end{thebibliography}
\end{document}